\newcommand{\n}{{\rho}}
\newcommand{\m}{{\eta}}
\newsavebox\myboxA
\newsavebox\myboxB
\newlength\mylenA
\newcommand*\xoverline[2][0.75]{%
    \sbox{\myboxA}{$\m@th#2$}%
    \setbox\myboxB\null
    \ht\myboxB=\ht\myboxA%
    \dp\myboxB=\dp\myboxA%
    \wd\myboxB=#1\wd\myboxA
    \sbox\myboxB{$\m@th\overline{\copy\myboxB}$}
    \setlength\mylenA{\the\wd\myboxA}
    \addtolength\mylenA{-\the\wd\myboxB}%
    \ifdim\wd\myboxB<\wd\myboxA%
       \rlap{\hskip 0.5\mylenA\usebox\myboxB}{\usebox\myboxA}%
    \else
        \hskip -0.5\mylenA\rlap{\usebox\myboxA}{\hskip 0.5\mylenA\usebox\myboxB}%
    \fi}
\NewDocumentCommand{\Iij}{ O{i} O{j} }{\ensuremath{[#1>#2]}}
\NewDocumentCommand{\Iji}{ O{i} O{j} }{\ensuremath{[#1<#2]}}
\NewDocumentCommand{\TigLmu}{ O{k} O{\ensuremath{\gamma L}} O{\ensuremath{\kappa}} }{\ensuremath{T_{#1} (#2, #3)}}
\NewDocumentCommand{\Ei}{ O{k} O{x} }{\ensuremath{E_{#1} (#2)}}
\NewDocumentCommand{\Tinm}{ O{k} }{\ensuremath{T_{#1}(\n,\m)}}
\NewDocumentCommand{\gbari}{ O{k} O{\ensuremath{(\kappa})} }{\ensuremath{\xoverline{\gamma L}_{#1}{#2}}}
\NewDocumentCommand{\gstar}{ O{\ensuremath{\kappa}} }{\ensuremath{(\gamma L)^*{#1}}}
\NewDocumentCommand{\pigLgmu}{ O{\ensuremath{\gamma}} }{\ensuremath{p({#1}L, {#1}\mu)}}
\NewDocumentCommand{\idproof}{ O{\ensuremath{k}} }{\ensuremath{#1}}
\NewDocumentCommand{\thetagLgmu}{ O{\ensuremath{\gamma}} }{\ensuremath{\theta(#1 L, #1 \mu)}}
\NewDocumentCommand{\cgLgmu}{ O{\ensuremath{\gamma}} }{\ensuremath{c(#1 L, #1 \mu)}}
\newcommand{\centered}[1]{\begin{tabular}{l} #1 \end{tabular}}
\DeclareMathOperator*{\argmin}{argmin}
\DeclareMathOperator*{\maximize}{maximize}
\DeclareMathOperator*{\minimize}{minimize}
\let\cl@chapter\undefined
\newtheorem{theorem}{Theorem}[section]
\newtheorem{proposition}{Proposition}[section]
\newtheorem{conjecture}{Conjecture}[section]
\newtheorem{corollary}{Corollary}[section]
\newtheorem{remark}{Remark}[section]
\newtheorem{definition}{Definition}[section]
\newtheorem{assumption}{Assumption}[section]
\newtheorem{lemma}{Lemma}[section]
\title{\LARGE \bf
Improved convergence rates for the Difference-of-Convex algorithm}
\author{Teodor Rotaru, Panagiotis Patrinos, François Glineur
\thanks{T. Rotaru and P. Patrinos are with Department of Electrical Engineering (ESAT-STADIUS),  KU Leuven, B-3001 Leuven, Belgium
        {\tt\scriptsize teodor.rotaru@kuleuven.be; panos.patrinos@kuleuven.be}}%
\thanks{T. Rotaru and F. Glineur are with ICTEAM-INMA and CORE,  Université catholique de Louvain, B-1348 Louvain-la-Neuve, Belgium
        {\tt\scriptsize francois.glineur@uclouvain.be}}%
\thanks{T. Rotaru is supported by a grant from the Global PhD partnership between KU Leuven and UCLouvain.}
}
\begin{document}
\twocolumn

\maketitle
\thispagestyle{empty}
\pagestyle{empty}

\begin{abstract}%
We consider a difference-of-convex formulation where one of the terms is allowed to be hypoconvex (or weakly convex). We first examine the precise behavior of a single iteration of the Difference-of-Convex algorithm (DCA), giving a tight characterization of the objective function decrease. This requires distinguishing between eight distinct parameter regimes.

Our proofs are inspired by the performance estimation framework, but are much simplified compared to similar previous work.

We then derive sublinear DCA convergence rates towards critical points, distinguishing between cases where at least one of the functions is smooth and where both functions are nonsmooth. We conjecture the tightness of these rates for four parameter regimes, based on strong numerical evidence obtained via performance estimation, as well as the leading constant in the asymptotic sublinear rate for two more regimes.
\end{abstract}

\section{Introduction}
Consider the difference-of-convex formulation
\begin{align}\label{eq:DC_program}\tag{DC}
    \minimize_{x \in \mathbb{R}^d} F(x) := f_1(x) - f_2(x),
\end{align}
where $f_1, f_2: \mathbb{R}^d \rightarrow \mathbb{R}$ are proper, lower semicontinuous convex functions, and $F$ is lower bounded. 

A standard method to solve \eqref{eq:DC_program} is the Difference-of-Convex Algorithm (DCA), a versatile method with no parameter that can find a \emph{critical} point of $F$, defined as a point $x^* \in \mathbb{R}^d$ for which there exists subgradients $g_1^* \in \partial 
f_1(x^*)$ and $g_2^* \in \partial 
f_2(x^*)$ such that $g_1^*=g_2^*$. 
Stationary points of $F$ are always critical, but the converse is not true.
Extensive analyses of DCA are provided in \cite{Dinh_Thi_dca_1997,LeThi_2018_30_years_dev,Horst_Thoai_DC_overview_1999}. DCA is also referred to as the convex-concave procedure (CCCP), as seen in \cite{CCCP_2001_init_Alan_Anand,CCCP_2009_Convergence,CCCP_Lipp_Boyd_2016}.

Convergence rates of DCA to critical points can be derived in two distinct cases, each using a suitable performance criterion: case (S) when at least one of $f_1$ and $f_2$ is smooth, and case (NS) 
when both $f_1$ and $f_2$ are nonsmooth. The recent work \cite{abbaszadehpeivasti2021_DCA} by Abbaszadehpeivasti, de Klerk and Zamani provides exact convergence rates of DCA in both cases. Their approach, based on performance estimation \cite{drori_performance_2014, taylor_smooth_2017}, leads to rigorous proofs for those rates. The fact that these rates are exact is supported by strong numerical
evidence and, in some cases, by the identification of instances matching those rates exactly.

In this work, we generalize the standard \eqref{eq:DC_program} setting and consider the case where one of the terms can be nonconvex. More precisely, we assume that either $f_1$ or $f_2$ is hypoconvex (or weakly convex). Some previous works also introduce hypoconvexity in either $f_1$ (see  \cite{DC_alg_dif_Moreau_Smoothing_2022}) or $f_2$ (see \cite{dc_weakly_cvx_bundle_method_2023_Syrtseva}).

Our results follow the same line as \cite{abbaszadehpeivasti2021_DCA}, also relying on performance estimation. More precisely:
\begin{itemize}\item In the smooth case (S), we characterize in \cref{thm:one_step_decrease_dca} the exact behavior of one iteration of \eqref{eq:DCA_it} in the setting where $f_1$ or $f_2$ is hypoconvex, providing a lower bound for the decrease of the objective expressed in terms of differences of subgradients. 

\item \cref{thm:one_step_decrease_dca} describes a total of eight different regimes, partitioning the space of problem parameters (in terms of smoothness and strong convexity of both terms, see below) in eight disjoint regions. We conjecture that these bounds on the objective decrease are tight for all of those eight regimes.

Among those eight regimes only two  were previously known and proved in \cite{abbaszadehpeivasti2021_DCA}, corresponding to the standard \eqref{eq:DCA_it} setting ($f_1$ and $f_2$ convex) where in addition $F$ is required to be both nonconvex and nonconcave.
\item We prove in \cref{thm:dca_rates_N_steps} that, in our setting allowing one hypoconvex term, DCA converges sublinearly to critical points, with a  $\mathcal{O}(\frac{1}{N})$ rate after $N$ iterations, again with eight distinct regimes.
\item Based on strong numerical evidence, we conjecture that four of those rates are exact for any number of iterations, while for two other regimes we identify the leading constant in the sublinear rate.
\item In the nonsmooth case (NS), we characterize the decrease of the objective function after one step in \cref{thm:both_functions_nonsmooth}, using a suitably adapted optimality criterion, and deduce the corresponding $\mathcal{O}(\frac{1}{N})$ sublinear rate in  \cref{thm:convergence_both_nonsmooth}, strengthening and generalizing  the existing rates of \cite{abbaszadehpeivasti2021_DCA}. 
\end{itemize}

\section{Theoretical background}%

\begin{definition}\label{def:upper_lower_curvature} 
Let $L>0$ and $\mu \le L$. We say that a proper, lower semicontinuous function $f:\mathbb{R}^d\rightarrow \mathbb{R}$ belongs to the class $\mathcal{F}_{\mu, L}(\mathbb{R}^d)$ (or simply $\mathcal{F}_{\mu, L}$) if and only if it has both (i) upper curvature $L$, meaning that $ \tfrac{L}{2} \|\cdot\|^2 - f$ is convex, and (ii) lower curvature $\mu$, meaning that $ f - \tfrac{\mu}{2} \|\cdot\|^2$ is convex. We also define the class $\mathcal{F}_{\mu, \infty}(\mathbb{R}^d)$ which requires only lower curvature $\mu$.
\end{definition}%

Intuitively, the curvature bounds $\mu$ and $L$ correspond to the minimum and maximum eigenvalues of the Hessian for a function $f \in \mathcal{C}^2$. Functions in $\mathcal{F}_{\mu, L}$ must be smooth when $L<\infty$, while $\mathcal{F}_{\mu, \infty}$ also contains nonsmooth functions.

Depending on the sign of the lower curvature $\mu$, a function $f \in \mathcal{F}_{\mu,L}$ is categorized as: (i) hypoconvex (or weakly convex) when $\mu<0$, (ii) convex when $\mu=0$ or (iii) strongly convex for $\mu>0$. 

\smallskip The subdifferential of a proper, lower semicontinuous convex function $f:\mathbb{R}^d \rightarrow \mathbb{R}$ at a point $x \in \mathbb{R}^d$ is defined as
\begin{align*}\label{eq:subdif_cvx}
    \partial f(x) := \{ g \in \mathbb{R}^d \,|\, f(y) {}\geq{} f(x) + \langle g, y-x \rangle  \ \forall y\in \mathbb{R}^d\}.
\end{align*}%
For hypoconvex functions the subdifferential can be defined as follows \cite[Proposition 6.3]{Bauschke_generalized_monotone_operators_2021}. Let $f \in \mathcal{F}_{\mu,\infty}$ be a hypoconvex function with $\mu < 0$. Then $\tilde{f}(x):=f(x) - \mu \tfrac{\|x\|^2}{2}$ is convex with a well-defined subdifferential $\tilde{f}(x)$, and we let \[ \partial f(x) := \{ g - \mu x \,|\, g \in \partial \tilde{f}(x) \}.\]
If $f$ is differentiable at $x$, then $\partial f(x) = \{ \nabla f(x)\}$.

\smallskip The following Lemma will prove useful later.
\begin{lemma}[Lemma 2.5 from \cite{rotaru2022tight}]\label{lemma:hypoconvex_smooth_quad_bounds}
    Let $L > 0$ and $\mu \le L$, and let $f\in \mathcal{F}_{\mu, L}$.  Then we have the following $\forall x, y \in \mathbb{R}^d$:
$$
    \tfrac{\mu}{2} \|x-y\|^2 
        {} \leq {} 
    f(x) - f(y) - \langle g, x-y \rangle 
        {} \leq {} 
    \tfrac{L}{2} \|x-y\|^2,
$$
where $g \in \partial f(y)$.
\end{lemma}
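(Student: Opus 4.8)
The plan is to derive each of the two inequalities from the single subgradient inequality for a convex function, applied to the two auxiliary functions supplied by \cref{def:upper_lower_curvature}. Since $L<\infty$, the function $f$ is smooth, so $\partial f(y)=\{\nabla f(y)\}$ and I may simply write $g=\nabla f(y)$; this sidesteps any delicacy in the subgradient correspondence. I would introduce $\tilde{f} := f-\tfrac{\mu}{2}\|\cdot\|^2$ and $h := \tfrac{L}{2}\|\cdot\|^2-f$, both convex by the lower- and upper-curvature hypotheses respectively, with gradients $\nabla\tilde{f}(y)=g-\mu y$ and $\nabla h(y)=Ly-g$.

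For the lower bound, I would apply the gradient inequality for the convex $\tilde{f}$ at the base point $y$, namely $\tilde{f}(x)\ge \tilde{f}(y)+\langle \nabla\tilde{f}(y),x-y\rangle$. Substituting the definitions of $\tilde{f}$ and $\nabla\tilde{f}(y)$ and moving the quadratic terms to the right-hand side yields, after collecting terms, exactly $f(x)-f(y)-\langle g,x-y\rangle\ge \tfrac{\mu}{2}\|x-y\|^2$; the only algebra needed is the identity $\|x\|^2-\|y\|^2-2\langle y,x-y\rangle=\|x-y\|^2$. The upper bound is symmetric: applying the gradient inequality for the convex $h$ at $y$, substituting $h=\tfrac{L}{2}\|\cdot\|^2-f$ and $\nabla h(y)=Ly-g$, and invoking the same identity gives $f(x)-f(y)-\langle g,x-y\rangle\le \tfrac{L}{2}\|x-y\|^2$.

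I do not expect a serious obstacle: the argument is essentially a shift of $f$ by a quadratic to expose convexity, combined with one completion-of-squares identity. The only point requiring care is the bookkeeping of signs in the gradient correspondences $\nabla\tilde{f}(y)=g-\mu y$ and $\nabla h(y)=Ly-g$, together with the observation that it is precisely the finiteness of $L$ (and the resulting smoothness of $f$) that makes the single gradient $g$ well defined for the upper estimate. Note that the hypoconvex regime $\mu<0$ requires no separate treatment, since the sign of $\mu$ is never used in the computation.
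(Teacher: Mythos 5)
The paper does not prove this lemma itself; it imports it verbatim from the cited reference, so there is no in-paper proof to compare against. Your argument is correct and is the standard one: apply the subgradient inequality to the two convex shifts $f-\tfrac{\mu}{2}\|\cdot\|^2$ and $\tfrac{L}{2}\|\cdot\|^2-f$ and use the identity $\|x\|^2-\|y\|^2-2\langle y,x-y\rangle=\|x-y\|^2$; the sign bookkeeping you describe checks out. One minor remark: the paper also invokes this lemma for $f\in\mathcal{F}_{\mu,\infty}$ (e.g.\ in the sufficient-decrease proposition), where $f$ need not be differentiable; your lower-bound derivation survives unchanged there if you keep $g-\mu y\in\partial\tilde f(y)$ as a genuine subgradient rather than a gradient, while the upper bound is vacuous, so the appeal to smoothness is a convenience rather than a necessity.
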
%

\smallskip \begin{assumption}[Objective and parameters]\label{assumption:curvatures_on_F}
The objective function $F$ in \eqref{eq:DC_program} is lower bounded and can be written $F = f_1 - f_2$ where $f_1 \in \mathcal{F}_{\mu_1,L_1}$ and $f_2 \in \mathcal{F}_{\mu_2,L_2}$ with parameters $L_1,L_2 \in (0,\infty]$ and $\mu_1, \mu_2 \in (-\infty,\infty)$ such that $\mu_1<L_1$ and $\mu_2<L_2$.
\end{assumption}

We make \cref{{assumption:curvatures_on_F}} through the rest of this paper, and observe that it implies $F \in \mathcal{F}_{\mu_1 - L_2 \,,\, L_1 - \mu_2}$.

\medskip 

\noindent \textbf{Difference-of-Convex algorithm (DCA) iteration.} 
\begin{align}\label{eq:DCA_it}\tag{$\text{DCA}^{}$}
\begin{aligned}
    \text{1. } & \text{Select } g_2 \in \partial f_2(x). \\
    \text{2. } & \text{Select } x^{+} \in \argmin_{w\in \mathbb{R^d}} \{f_1(w) - \langle g_2, w \rangle\}.
\end{aligned}    
\end{align}%

It is easy to see that the optimality condition in the definition of $x^+$ implies the existence of $g_1^+ \in \partial f_1(x^+)$ such that $g_1^+ = g_2$. This is the only characterization of $x^+$ that will be used in our derivations for the smooth case (S).

An iteration of DCA is not well-defined when the minimum in the definition of $x^+$ does not exist. One can guarantee that all iterations are well-defined (although not necessarily uniquely defined)
when for example $f_1$ is supercoercive, meaning $\lim_{\|w\|\rightarrow \infty} \frac{f_1(w)}{\|w\|} = \infty$ \cite[Definition 11.11]{Bauschke_Cvx_Analysis_2011}. A sufficient condition for supercoercivity is for $f_1$ to be strongly convex \cite[Corollary 11.17]{Bauschke_Cvx_Analysis_2011}, i.e., $\mu_1 > 0$.

Recall that a critical point $x^*$ is such that $\partial f_2(x^*) \cap \partial f_1(x^*) \neq \emptyset$. When both functions $f_1$ and $f_2$ are smooth, any critical point $x^*$ is clearly stationary as we have $\nabla F(x^*) = \nabla f_1(x^*) - \nabla f_2(x^*) = 0$. When only $f_2$ is smooth, we have $\partial F(x^*) = \partial f_1(x^*) - \nabla f_2(x^*)$ (see \cite[Exercise 10.10]{RockWets98}) and criticality also implies stationarity, since $0 \in \partial F(x^*)$. However, when only $f_1$ is smooth, we can only guarantee the inclusion $\partial (-f_2) (x^*) \subseteq -\partial f_2(x^*)$
\cite[Corollary 9.21]{RockWets98}, implying only $\partial F(x^*) \subseteq \nabla f_1(x^*) - \partial f_2(x^*)$, and critical points may not be stationary. Finally, when both functions are nonsmooth, a necessary condition for stationarity of $x^*$ in \eqref{eq:DCA_it} is $\partial f_2(x^*) \subseteq \partial f_1(x^*)$ (see \cite[Proposition 3.1]{Hiriart_Urruty1989_nec_suff_conds_global_optimality}), strictly stronger that criticality. 

\smallskip
\begin{proposition}[Sufficient condition for decrease] \label{prop:sufficient_decrease}
Let $f_1 \in \mathcal{F}_{\mu_1,L_1}$ and $f_2 \in \mathcal{F}_{\mu_2,L_2}$. If $\mu_1 + \mu_2 \ge 0$, then the objective function $F = f_1 - f_2$ decreases after each iteration of \eqref{eq:DCA_it}. Moreover, if $\mu_1 + \mu_2 > 0$ that objective decrease is strict, unless $x^+ = x$.
\end{proposition}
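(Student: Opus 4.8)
The plan is to lower-bound the per-iteration decrease $F(x) - F(x^+)$ directly, combining the lower-curvature inequalities supplied by \cref{lemma:hypoconvex_smooth_quad_bounds} with the DCA optimality condition $g_1^+ = g_2$. Writing $F(x) - F(x^+) = [f_1(x) - f_1(x^+)] - [f_2(x) - f_2(x^+)]$, I would bound the first bracket from below and the second from above, arranging matters so that the two subgradient (linear) terms cancel.

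Concretely, first I would apply the left (lower-curvature) inequality of \cref{lemma:hypoconvex_smooth_quad_bounds} to $f_1$ at the base point $x^+$ with $g_1^+ \in \partial f_1(x^+)$, evaluated at $x$, obtaining $f_1(x) - f_1(x^+) \ge \langle g_1^+, x - x^+\rangle + \tfrac{\mu_1}{2}\|x - x^+\|^2$. Next I would apply the same inequality to $f_2$ at the base point $x$ with $g_2 \in \partial f_2(x)$, evaluated at $x^+$, which after negating and rearranging yields the upper bound $f_2(x) - f_2(x^+) \le \langle g_2, x - x^+\rangle - \tfrac{\mu_2}{2}\|x - x^+\|^2$. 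Subtracting the second from the first and invoking the key DCA identity $g_1^+ = g_2$, the inner-product terms cancel exactly, leaving $F(x) - F(x^+) \ge \tfrac{\mu_1 + \mu_2}{2}\|x - x^+\|^2$.

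The conclusions then follow at once: if $\mu_1 + \mu_2 \ge 0$ the right-hand side is nonnegative, so $F(x^+) \le F(x)$; and if $\mu_1 + \mu_2 > 0$ the bound is strictly positive unless $x^+ = x$, giving the strict decrease. I do not expect a genuine obstacle here — the only point requiring care is the bookkeeping of which base point to use for each function so that the linear terms cancel under $g_1^+ = g_2$. It is worth noting that only the lower-curvature ($\mu_1, \mu_2$) halves of \cref{lemma:hypoconvex_smooth_quad_bounds} enter, and since each is simply the subgradient inequality for $f_i - \tfrac{\mu_i}{2}\|\cdot\|^2$, the argument needs no smoothness and hence applies even when $L_1$ or $L_2$ is infinite. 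The resulting quantity $\tfrac{\mu_1+\mu_2}{2}\|x - x^+\|^2$ is precisely the coarse bound that the sharper regime-by-regime analysis of \cref{thm:one_step_decrease_dca} will later refine.
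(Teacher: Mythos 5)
Your proposal is correct and follows essentially the same route as the paper: both apply the lower-curvature half of \cref{lemma:hypoconvex_smooth_quad_bounds} to $f_1$ at base point $x^+$ and to $f_2$ at base point $x$, combine the two, and use $g_1^+ = g_2$ to cancel the linear terms, arriving at $F(x) - F(x^+) \ge \tfrac{\mu_1+\mu_2}{2}\|x - x^+\|^2$. The only cosmetic difference is that the paper records the matching upper bound $\tfrac{L_1+L_2}{2}\|x-x^+\|^2$ in the same chain, which is not needed for the stated conclusions.
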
%
\begin{proof} Our proof is inspired from \cite[Theorem 3, Proposition 2]{Dinh_Thi_dca_1997}. Using \cref{lemma:hypoconvex_smooth_quad_bounds} on function $f_1$ with $(x,y)=(x,x^+)$ and on function $f_2$ with $(x,y)=(x^+,x)$, and summing the resulting inequalities, we obtain, where $g_1^+ \in \partial f_1(x^+)$ and $g_2 \in \partial f_2(x)$:%
$$\begin{aligned}
& \frac{\mu_1 + \mu_2}{2} \|x^+ - x\|^2
        {}\leq{}
        f_1(x) - f_1(x^+) - \langle g_1^+\,,\, x - x^+ \rangle 
        {}+{} 
        \\ & {}\quad{} f_2(x^+) - f_2(x) - \langle g_2\,,\, x^+ - x \rangle
        \leq        
        \frac{L_1 + L_2}{2} \|x^+ - x\|^2.
\end{aligned}$$%
By using $F(x) = f_1(x) - f_2(x)$ and $g_1^+ = g_2$ we get:%
$$
        \frac{\mu_1 + \mu_2}{2} \|x - x^+\|^2
            \leq
        F(x) - F(x^+) 
        \leq
        \frac{L_1 + L_2}{2} \|x - x^+\|^2,
$$
which is enough to prove both parts of the proposition.%
\end{proof}%
\medskip \textbf{Notation:} Superscripts  indicate the iteration index (e.g., $x^k$ represents the $k$-th iterate). We use $G(x)$ or $G$ to denote a difference of subgradients $g_1 - g_2$ for some $g_1 \in \partial f_1(x)$ and $g_2 \in \partial f_2(x)$.

\section{Rates for the smooth case (S)}
In this section we study the case (S) where at least one function $f_1$ or $f_2$ is smooth.%
\begin{theorem}[One-step decrease]\label{thm:one_step_decrease_dca}
Let $f_1 \in \mathcal{F}_{\mu_1,L_1}$ and $f_2 \in \mathcal{F}_{\mu_2,L_2}$ satisfying \cref{{assumption:curvatures_on_F}},  assume at least $f_1$ or $f_2$ is smooth, and assume $\mu_1 + \mu_2 > 0$ or $\mu_1=\mu_2=0$. Then after one step of \eqref{eq:DCA_it} we have
\begin{align}\label{eq:tight_dist_1_with_sigmas}
    F(x)-F(x^+)
    {}\geq{} \sigma_i \tfrac{1}{2} \|g_1 - g_2\|^2 + \sigma_i^+ \tfrac{1}{2} \|g_1^+ - g_2^+\|^2, 
\end{align}%
with $g_1 \in \partial f_1(x)$, $g_1^+ \in \partial f_1(x^+)$, $g_2 \in \partial f_2(x)$, $g_2^+ \in \partial f_2(x^+)$ and the expressions of $\sigma_i, \sigma_i^+ \geq 0$ correspond to one of the eight regimes indexed by $i = 1,\dots,8$ provided in \cref{tab:DCA_regimes_one_step} according to the values of parameters $L_1, L_2, \mu_1, \mu_2$. %
\end{theorem}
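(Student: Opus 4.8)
The plan is to carry out the performance-estimation argument explicitly: I will lower-bound $F(x)-F(x^+)$ by a nonnegative combination of valid two-point inequalities between the iterates $x$ and $x^+$, and then certify that the resulting quadratic form dominates the right-hand side of \eqref{eq:tight_dist_1_with_sigmas}. The organizing observation is that the DCA optimality relation $g_1^+=g_2$ is remarkably economical: it rewrites the two criticality residuals as single-function subgradient variations, $g_1-g_2=g_1-g_1^+$ and $g_1^+-g_2^+=g_2-g_2^+$. Consequently $\tfrac12\|g_1-g_2\|^2$ is governed entirely by $f_1$ at $x$ and $x^+$, and $\tfrac12\|g_1^+-g_2^+\|^2$ entirely by $f_2$ at $x$ and $x^+$, which is exactly why the two decoupled terms in \eqref{eq:tight_dist_1_with_sigmas} should arise.

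The building blocks are the tight interpolation inequalities for the class $\mathcal{F}_{\mu,L}$, i.e.\ the refinement of \cref{lemma:hypoconvex_smooth_quad_bounds} that keeps the squared subgradient-difference term. Writing $G=g_1-g_2$, $G^+=g_1^+-g_2^+$, $d=x^+-x$, and $c_j=\mu_jL_j/(L_j-\mu_j)$, I would first add the inequality for $f_1$ that lower-bounds $f_1(x)-f_1(x^+)$ to the inequality for $f_2$ that lower-bounds $f_2(x^+)-f_2(x)$, each with multiplier $1$. Substituting $g_1^+=g_2$ makes the linear inner-product terms cancel identically, leaving
\[ F(x)-F(x^+)\;\ge\;\tfrac{1}{2L_1}\|G\|^2+\tfrac{1}{2L_2}\|G^+\|^2+\tfrac{c_1}{2}\bigl\|d+\tfrac{1}{L_1}G\bigr\|^2+\tfrac{c_2}{2}\bigl\|d+\tfrac{1}{L_2}G^+\bigr\|^2. \]
When $\mu_1,\mu_2\ge 0$ the coefficients $c_1,c_2$ are nonnegative, the last two squares may be dropped, and we obtain $\sigma=1/L_1$, $\sigma^+=1/L_2$; this already settles the convex regimes.

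The substance is the hypoconvex case, where $\mu_1<0$ (or $\mu_2<0$) renders one $c_j$ negative, so the corresponding square cannot simply be discarded. Here I would enlarge the combination with the reverse interpolation inequalities of $f_1$ and $f_2$, carried with free multipliers $\beta_1,\gamma_2\ge 0$; matching the function-value coefficients to $F(x)-F(x^+)$ forces the forward multipliers to equal $1+\beta_1$ and $1+\gamma_2$, while $g_1^+=g_2$ now collapses the linear terms into a single bilinear expression $\langle \beta_1 G+\gamma_2 G^+,\,d\rangle$. After the combination we are left with a pure quadratic form in the three vectors $d$, $G$, $G^+$, and the proof reduces to choosing $\beta_1,\gamma_2$ (possibly zero) so that this form minus $\sigma_i\tfrac12\|G\|^2+\sigma_i^+\tfrac12\|G^+\|^2$ is positive semidefinite, which I expect to verify by a Schur-complement / completion-of-square certificate in the scalar quantities $\|d\|^2,\|G\|^2,\|G^+\|^2,\langle d,G\rangle,\langle d,G^+\rangle,\langle G,G^+\rangle$. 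The eight regimes of \cref{tab:DCA_regimes_one_step} should correspond precisely to the distinct active sets of $\{\beta_1,\gamma_2\}$ and to which square survives, with boundaries where a multiplier or a coefficient $\sigma_i,\sigma_i^+$ vanishes; the assumption $\mu_1+\mu_2>0$ (or $\mu_1=\mu_2=0$) is what keeps the relevant form bounded below. The nonsmooth subcases $L_1=\infty$ or $L_2=\infty$ follow as limits in which the corresponding interpolation inequality degenerates to the plain hypoconvex subgradient inequality of \cref{lemma:hypoconvex_smooth_quad_bounds}.

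The main obstacle is exactly this last bookkeeping: determining, for each region of $(\mu_1,\mu_2,L_1,L_2)$-space, which inequalities are active together with the precise multipliers, and then proving the residual quadratic form is positive semidefinite with the stated $\sigma_i,\sigma_i^+$ (and checking $\sigma_i,\sigma_i^+\ge 0$ throughout). I would read off the candidate multipliers and the partition into eight regions from the dual of the numerically solved PEP semidefinite program, and then discharge each regime by an explicit algebraic square-completion; the delicate points are locating the regime boundaries correctly and treating the two infinite-$L$ limits uniformly.
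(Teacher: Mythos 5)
Your overall framework coincides with the paper's: aggregate the $\mathcal{F}_{\mu,L}$-interpolation inequalities of \cref{thm:interp_hypo_characterization_min} between $x$ and $x^+$ with nonnegative multipliers tuned per parameter regime, use $g_1^+=g_2$ to collapse the linear terms, read the candidate multipliers off the dual of the numerically solved PEP, and certify each regime by completing squares. Your free multipliers $\beta_1,\gamma_2$ are exactly the paper's $\alpha_i$ (the paper attaches $\alpha$ to the sum of the forward and reverse interpolation inequalities of one function, which is the same bookkeeping as your $(1+\beta_1)$ forward / $\beta_1$ reverse split, and it only ever activates one of the two at a time). The problem is that the single step you actually complete is wrong for part of its claimed scope: taking both multipliers equal to $1$, discarding the two nonnegative squares, and concluding $\sigma=1/L_1$, $\sigma^+=1/L_2$ does \emph{not} ``settle the convex regimes.'' It matches \cref{tab:DCA_regimes_one_step} only when $\mu_1=\mu_2=0$. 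Regime $p_1$ also covers $L_1\ge L_2>\mu_1>0$, where the theorem asserts $\sigma_1=\tfrac{1}{L_2}\tfrac{L_2-\mu_1}{L_1-\mu_1}$ and $\sigma_1^+=\tfrac{1}{L_2}\bigl(1+\tfrac{L_2^{-1}-L_1^{-1}}{\mu_1^{-1}-L_1^{-1}}\bigr)$; these differ from $1/L_1$ and $1/L_2$, are not dominated by them coefficientwise, and their sum is strictly larger (for $L_1=4$, $L_2=3$, $\mu_1=1$, $\mu_2=0$ one gets $1/L_1+1/L_2=7/12$ versus $p_1=16/27>7/12$, and it is the sum $p_i$ that drives \cref{thm:dca_rates_N_steps}). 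Establishing the stated constants requires the nonzero correction multiplier $\alpha_1=\tfrac{\mu_1}{L_2}\tfrac{L_1-L_2}{L_1-\mu_1}$ even in the purely convex, strongly convex case.

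Beyond that, your proposal defers precisely the content of the theorem: identifying the eight regions, the value of the active multiplier on each, and the explicit square completions showing the residual quadratic form is a nonnegative combination of squares. The regime boundaries (e.g.\ $\mu_1^{-1}+\mu_2^{-1}+L_2^{-1}\le 0$ separating $p_1$/$p_3$ from $p_5$) emerge exactly as the sign conditions on the coefficients of those completed squares, so none of the domain descriptions in \cref{tab:DCA_regimes_one_step} can be verified without carrying the algebra out, as the paper does regime by regime in \cref{sec:proof_one_step_analysis}. Your opening observation that $g_1-g_2=g_1-g_1^+$ and $g_1^+-g_2^+=g_2-g_2^+$ is correct and explains cleanly why the two residual norms decouple by function, but it does not substitute for that case analysis.
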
%

The eight regimes appearing in \cref{tab:DCA_regimes_one_step} are depicted on \cref{fig:all_together}. It is interesting to observe the striking symmetry between the regimes with odd and even indices. More specifically, formulas in \cref{tab:DCA_regimes_one_step} for $p_{2,4,6,8}$ derive from $p_{1,3,5,7}$ by exchanging $L_1 \leftrightarrow L_2$, $\mu_1 \leftrightarrow \mu_2$, and $\sigma_i \leftrightarrow \sigma_i^+$. The proof of \cref{thm:one_step_decrease_dca} is postponed to \cref{sec:proof_one_step_analysis}.%
\begin{conjecture}\label{conjecture:tightness_one_N=1}
All eight regimes outlined in \cref{thm:dca_rates_N_steps} are tight, i.e., the corresponding lower bounds on the objective decrease cannot be improved.
\end{conjecture}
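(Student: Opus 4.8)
The plan is to prove tightness in the usual performance-estimation sense: for each regime $i$, exhibit an explicit instance consisting of functions $f_1 \in \mathcal{F}_{\mu_1,L_1}$ and $f_2 \in \mathcal{F}_{\mu_2,L_2}$ together with points $x, x^+$ satisfying the \eqref{eq:DCA_it} optimality condition $g_1^+ = g_2$, for which \eqref{eq:tight_dist_1_with_sigmas} holds with equality. Since \cref{thm:one_step_decrease_dca} already guarantees the inequality, a single equality instance per regime shows that neither $\sigma_i$ nor $\sigma_i^+$ can be increased. The odd/even symmetry noted after the theorem --- interchanging $L_1 \leftrightarrow L_2$, $\mu_1 \leftrightarrow \mu_2$ and $\sigma_i \leftrightarrow \sigma_i^+$ --- means an equality instance for an odd regime immediately yields one for its even partner by swapping the roles of $f_1, f_2$ and of $x, x^+$. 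So it suffices to construct four instances, one per odd index, and the two regimes already settled in \cite{abbaszadehpeivasti2021_DCA} give a template for two of them.

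First I would make the search for these instances systematic by reading off the dual certificate underlying \cref{thm:one_step_decrease_dca}. Its proof presumably derives \eqref{eq:tight_dist_1_with_sigmas} as a nonnegative combination of the quadratic bounds of \cref{lemma:hypoconvex_smooth_quad_bounds}, applied to $f_1$ and $f_2$ at the pairs $(x, x^+)$ and $(x^+, x)$. Tightness then amounts to finding an instance at which every bound carrying a strictly positive multiplier holds with equality. This is the standard complementary-slackness reading of the associated performance-estimation SDP, whose unknowns are (after fixing $x^+ = 0$ by translation) the values $f_1(x), f_1(0), f_2(x), f_2(0)$ and the Gram matrix of $x, g_1, g_2, g_2^+$, with $g_1^+ = g_2$ imposed.

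Next I would construct the functions from that equality pattern. Equality in an upper bound $f(x)-f(y)-\langle g, x-y\rangle = \tfrac{L}{2}\|x-y\|^2$ (resp. a lower bound with $\tfrac{\mu}{2}$) forces $f$ to agree with a quadratic of curvature $L$ (resp. $\mu$) along the segment joining the two points, so I expect the extremal $f_1, f_2$ to be quadratics whose curvatures, in the one or two active directions, are pinned to the endpoint values $\mu_j$ or $L_j$ selected by the certificate. The residual freedom --- the length of $x$ and the alignment of $g_1$ and $g_2^+$ --- is then fixed to enforce $g_1^+ = g_2$ and to realize the exact ratio $\|g_1 - g_2\|/\|g_1^+ - g_2^+\|$ dictated by the multipliers. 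When one of $\sigma_i, \sigma_i^+$ vanishes only a single normalization term is active and a one-dimensional quadratic should suffice; when both are positive the two subgradient-difference terms must be realized independently, which I expect to require a two-dimensional construction. In each case one must still verify directly that the chosen quadratics belong to $\mathcal{F}_{\mu_j, L_j}$ and that the \eqref{eq:DCA_it} minimization genuinely returns the intended $x^+$; for explicit quadratics both checks are routine.

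The main obstacle I anticipate is guaranteeing that no higher-dimensional instance beats these low-dimensional candidates, i.e. certifying that the performance-estimation SDP attains its optimum at a Gram matrix of rank at most two. Because the proof of \cref{thm:one_step_decrease_dca} may use only the bounds of \cref{lemma:hypoconvex_smooth_quad_bounds} rather than the full two-point interpolation conditions for $\mathcal{F}_{\mu_j, L_j}$, the derived coefficients are a priori only a lower bound on the true worst case, and matching them requires the explicit instance rather than a mere multiplier count. This is most delicate at the boundaries between adjacent regimes, where the active-constraint pattern changes and the worst-case quadratic can degenerate --- for example as $\mu_1 + \mu_2 \downarrow 0$, or when a curvature constraint switches from active to inactive across a regime wall. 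Closing the gap between the strong numerical PEP evidence already reported and a rigorous proof thus reduces to writing the four instances in closed form, verifying the equalities algebraically, and supplying the rank argument that rules out any improvement.
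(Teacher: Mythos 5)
There is no proof to compare against: the paper deliberately leaves this statement as a conjecture, supported only by numerical evidence from solving many instances of \eqref{PEP:DCA}. Your proposal is a plan for how such a proof would go, not a proof --- no instance is actually exhibited, no equality is verified, and you yourself defer the substantive work to ``writing the four instances in closed form''. The overall strategy (one equality instance per regime, halved by the odd/even symmetry) is the standard and correct route, and the paper already hands you the equality patterns you propose to ``read off'': its proof of \cref{thm:one_step_decrease_dca} uses the full interpolation inequalities \eqref{eq:Interp_hypoconvex} from \cref{thm:interp_hypo_characterization_min} (not merely \cref{lemma:hypoconvex_smooth_quad_bounds}), and records when each bound is attained (e.g.\ $G = G^+ = L_2\,\Delta x$ for $p_1$, $G = \mu_1 \Delta x$, $G^+ = \mu_2 \Delta x$ for $p_5$). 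This also means you need not construct explicit quadratics at all: it suffices to exhibit triplets $(x^k, g^k, f^k)$ whose Gram data satisfy all interpolation inequalities together with $g_1^+ = g_2$ and realize the equality pattern, since the interpolation theorem then guarantees that genuine functions in $\mathcal{F}_{\mu_j,L_j}$ exist. The hard part --- and the reason the authors state this as a conjecture --- is verifying that such a feasible configuration exists in every regime, including the inequalities that carry zero multiplier in the proof; that feasibility check is precisely what your proposal omits.

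Separately, the ``main obstacle'' you identify is not an obstacle. Tightness of a lower bound on the decrease requires only one instance attaining equality; \cref{thm:one_step_decrease_dca} already certifies the inequality for \emph{every} instance in \emph{every} dimension, so no rank argument or exclusion of higher-dimensional competitors is needed. Conflating this with the (genuinely delicate) task of certifying optimality of the SDP relaxation misdirects the effort: the only missing ingredient is the explicit feasible equality instance per regime, and until those are written down and checked the conjecture remains exactly that.
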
%
\begin{table*}[t]
\centering
\caption{Exact decrease after one step: $F(x)-F(x^+)\geq \sigma_i \frac{1}{2} \|g_1-g_2\|^2 + \sigma_i^+ \frac{1}{2} \|g_1^+ - g_2^+ \|^2$ 
(see \cref{thm:one_step_decrease_dca}), where $\sigma_i, \sigma_i^+ \geq 0$ and $p_i = \sigma_i + \sigma_i^+$, with $i=1,\dots,8$. Scalar $\alpha_i \geq 0$ is a parameter of the proofs.}
\label{tab:DCA_regimes_one_step}
\resizebox{\textwidth}{!}{%
\begin{tabular}{|c|c|c|c|c|c|}
\hline
\textbf{Regime} &
  \centered{\textbf{$\sigma_i$}} &
  \centered{\textbf{$\sigma_i^+$}} &
  \textbf{$\alpha_i$} &
  \textbf{Domain} \\ [3pt] 
  \hline
\textbf{$p_1$} &
  $L_2^{-1} \dfrac{L_2-\mu_1}{L_1-\mu_1}$ &
  $L_2^{-1} \bigg( 1 + \dfrac{L_2^{-1}-L_1^{-1}}{\mu_1^{-1} - L_1^{-1} } \bigg) $ &
  $\dfrac{\mu_1}{L_2} \dfrac{L_1-L_2}{L_1-\mu_1}$ 
  & \makecell{ $L_1 \geq L_2 > \mu_1 \geq 0$; 
  $\mu_2 \geq 0$ or
  \\ 
  $\mu_1 > -\mu_2 > 0$ and $\mu_1^{-1} + \mu_2^{-1} + L_2^{-1}{}\leq{} L_1^{-1} \big(2 + \frac{L_2}{\mu_2} \big)$
  }
  \\ [8pt] \hline
\textbf{$p_2$} &
  $L_1^{-1} \bigg( 1 + \dfrac{L_1^{-1}-L_2^{-1}}{\mu_2^{-1} - L_2^{-1}} \bigg) $ &
  $L_1^{-1} \dfrac{L_1-\mu_2}{L_2-\mu_2}$ &
  $\dfrac{\mu_2}{L_1} \dfrac{L_2-L_1}{L_2-\mu_2}$
  & \makecell{ $L_2 \geq L_1 > \mu_2 \geq 0$; 
  $\mu_1 \geq 0$ or
  \\
  $\mu_2 > -\mu_1 > 0$ and $\mu_1^{-1} + \mu_2^{-1} + L_1^{-1} {}\leq{} L_2^{-1} \big(2 + \frac{L_1}{\mu_1} \big)$
  }
  \\ [8pt] \hline
\textbf{$p_3$} &
  $\dfrac{L_1^{-1}  \big( \mu_1^{-1} + \mu_2^{-1} + L_2^{-1} \big) }{ \mu_1^{-1} + \mu_2^{-1} + L_2^{-1} - L_1^{-1} }$ &
  $\dfrac{1}{L_2+\mu_2}$ &
  $\dfrac{-\mu_2}{L_2+\mu_2}$ 
  & \makecell{ $\mu_1 > -\mu_2 > 0$; $L_2 > \mu_1$; $L_1 > \mu_2$;
  \\
  $L_1^{-1} \big(2 + \frac{L_2}{\mu_2} \big) {}\leq{} \mu_1^{-1} + \mu_2^{-1} + L_2^{-1} {}\leq{} 0$
  }
  \\ [8pt] \hline
\textbf{$p_4$} &
  $\dfrac{1}{L_1+\mu_1}$ &
  $\dfrac{L_2^{-1} \big(\mu_1^{-1} + \mu_2^{-1} + L_1^{-1}\big)}{\mu_1^{-1} + \mu_2^{-1} + L_1^{-1} - L_2^{-1}} $ &
  $\dfrac{-\mu_1}{L_1+\mu_1}$ 
  & \makecell{ $\mu_2 > -\mu_1 > 0$; $L_2 > \mu_1$; $L_1 > \mu_2$;
  \\
  $L_2^{-1} \big(2 + \frac{L_1}{\mu_1} \big) {}\leq{} \mu_1^{-1} + \mu_2^{-1} + L_1^{-1} {}\leq{} 0$
  }
  \\ [8pt] \hline
\textbf{$p_5$} &
  $0$ 
  &
  $\dfrac{\mu_1+\mu_2}{\mu_2^2}$
  &
  $\dfrac{\mu_1+\mu_2}{-\mu_2}$
  & \makecell{$\mu_1 > -\mu_2 > 0$; $L_1 > \mu_2$; \\
    $
    \max 
    \big\{
    L_1^{-1} \big( 2 + \tfrac{L_2}{\mu_2} \big)
    \, , \,
    0
    \big\}
    {}<{}
    \mu_1^{-1} + \mu_2^{-1} + L_2^{-1}$}
  \\ [8pt] \hline
\textbf{$p_6$} &
  $\dfrac{\mu_1+\mu_2}{\mu_1^2}$
  &
  $0$ &
  $\dfrac{\mu_1+\mu_2}{-\mu_1}$
  & \makecell{$\mu_2 > -\mu_1 > 0$; $L_2> \mu_1$; \\
    $\,
    \max 
    \big\{
    L_2^{-1} \big(2 + \tfrac{L_1}{\mu_1}\big)
    \, , \,
    0
    \big\}
    {}<{}
    \mu_1^{-1} + \mu_2^{-1} + L_1^{-1} $} \\ [8pt] \hline
\textbf{$p_7$} &
  $0$
  &
  $\dfrac{L_2 + \mu_1}{L_2^2}$ &
  $\dfrac{\mu_1}{L_2}$ 
  & \makecell{$L_1 > \mu_1 > L_2 > 0$; \\
    $\mu_2 \big( \mu_1^{-1} + \mu_2^{-1} + L_2^{-1} \big) {} \geq {} 0 $ }
  \\ [8pt] \hline
\textbf{$p_8$} &
  $\dfrac{L_1 + \mu_2}{L_1^2}$
  &
  $0$ &
  $\dfrac{\mu_2}{L_1}$ 
  & \makecell{$L_2 > \mu_2 > L_1 > 0$; \\
    $\mu_1 \big( \mu_1^{-1} + \mu_2^{-1} + L_1^{-1} \big) {} \geq {} 0 $ } \\ [8pt] \hline%
\end{tabular}}
\end{table*}%
\begin{figure*}[t]
    \centering
    \includegraphics[width=.49\textwidth]{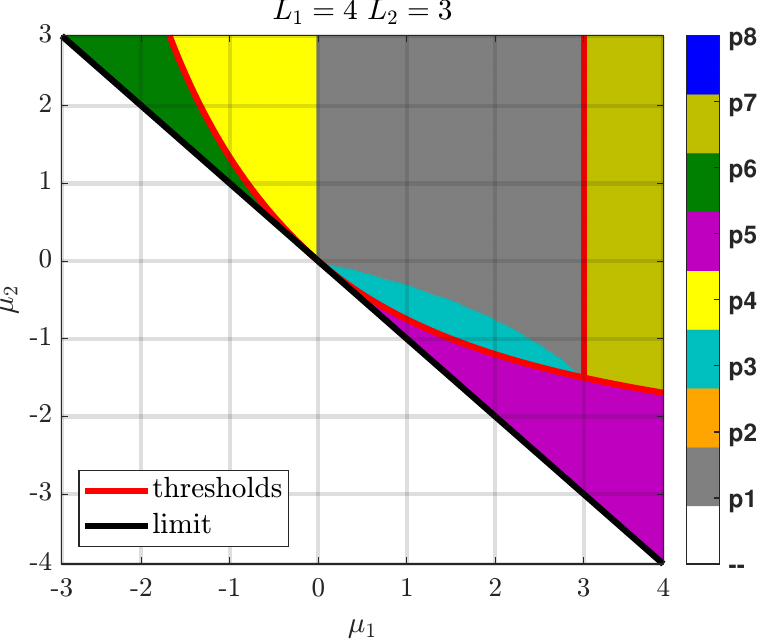}    
        {}{}
    \includegraphics[width=.49\textwidth]{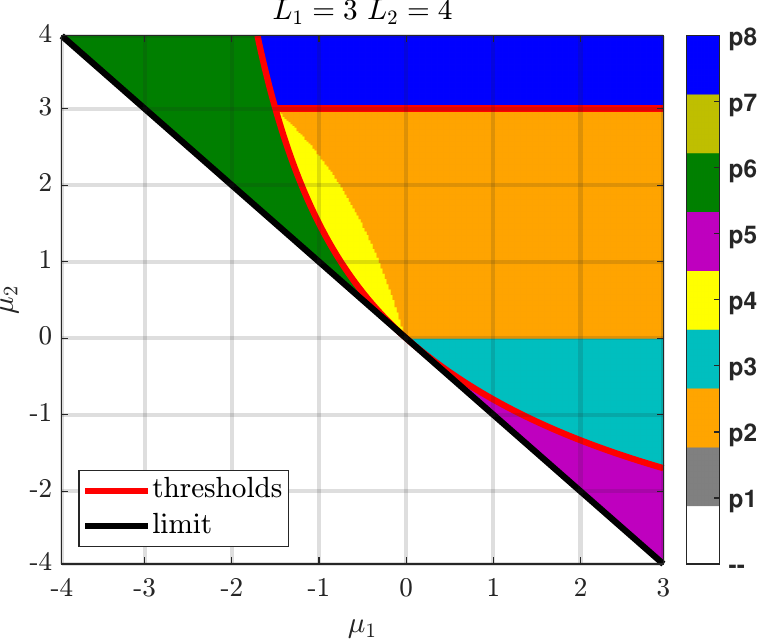}
    \caption{All regimes after one \eqref{eq:DCA_it} iteration (see \cref{tab:DCA_regimes_one_step} for precise expressions). The sufficient decrease condition from \cref{prop:sufficient_decrease} holds above the \textit{limit} line $\mu_1 + \mu_2 = 0$. Regimes $p_1$, $p_2$, $p_3$ and $p_4$, lying inside the area delimited by \textit{thresholds} (defined in \eqref{eq:threshold_cond_p3} for $\mu_1 > 0$ and \eqref{eq:threshold_cond_p4} for $\mu_2 > 0$), are (numerically conjectured to be) tight after $N$ steps.
    }
    \label{fig:all_together}
\end{figure*}%
\medskip \begin{theorem}[DCA sublinear rates]\label{thm:dca_rates_N_steps}
Let $f_1 \in \mathcal{F}_{\mu_1,L_1}$ and $f_2 \in \mathcal{F}_{\mu_2,L_2}$ satisfying \cref{{assumption:curvatures_on_F}},  assume at least $f_1$ or $f_2$ is smooth, and assume $\mu_1 + \mu_2 > 0$ or $\mu_1=\mu_2=0$. Then after $N$ iterations of \eqref{eq:DCA_it} starting from $x^0$ we have
\begin{align}\label{eq:rate_N_steps_no_F*}
    \tfrac{1}{2} \min_{0\leq k \leq N} \{\|g_1^k - g_2^k\|^2\} {}\leq{}
    \frac{F(x^0)-F(x^N)}{p_i(L_1, L_2, \mu_1, \mu_2) N},
\end{align}
where $g_1^k \in \partial f_1(x^k)$ and $g_2^k \in \partial f_2(x^k)$ for all $k=0,\dots,N$ and $p_i=\sigma_i + \sigma_i^+$ is given in \cref{tab:DCA_regimes_one_step}. Additionally, if $F$ is nonconcave (i.e., $L_1 > \mu_2$), the following  also holds
\begin{align}\label{eq:rate_N_steps_with_F*}
    \hspace{-.78em}
    \tfrac{1}{2} \min_{0\leq k \leq N} \{\|g_1^k - g_2^k\|^2\} \leq
    \frac{F(x^0)-F^*}{p_i(L_1, L_2, \mu_1, \mu_2) N + \frac{1}{L_1 - \mu_2}}.
\end{align}
\end{theorem}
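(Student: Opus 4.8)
The plan is to derive the sublinear rates in \cref{thm:dca_rates_N_steps} directly from the one-step decrease guaranteed by \cref{thm:one_step_decrease_dca}, by telescoping. First I would apply \cref{thm:one_step_decrease_dca} at each iteration $k=0,\dots,N-1$, which yields
\begin{align*}
F(x^k)-F(x^{k+1}) {}\geq{} \sigma_i \tfrac{1}{2}\|g_1^k - g_2^k\|^2 + \sigma_i^+ \tfrac{1}{2}\|g_1^{k+1}-g_2^{k+1}\|^2.
\end{align*}
Summing these $N$ inequalities, the left-hand side telescopes to $F(x^0)-F(x^N)$. On the right-hand side, the two families of squared-norm terms overlap across consecutive indices, so the coefficient multiplying $\tfrac{1}{2}\|g_1^k-g_2^k\|^2$ becomes $\sigma_i + \sigma_i^+ = p_i$ for the interior indices $k=1,\dots,N-1$, while the boundary terms $k=0$ and $k=N$ each receive only one of the two coefficients. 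The key observation is that since $\sigma_i,\sigma_i^+\geq 0$, discarding the leftover nonnegative boundary contributions only weakens the inequality, so I obtain $F(x^0)-F(x^N) \geq p_i \sum_{k} \tfrac{1}{2}\|g_1^k-g_2^k\|^2$ over a suitable index range of length at least $N$.

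Next I would lower-bound the sum by $N$ times its smallest term: since every summand is at least $\min_{0\leq k\leq N}\tfrac{1}{2}\|g_1^k-g_2^k\|^2$, the sum is at least $N$ times this minimum. Rearranging gives exactly \eqref{eq:rate_N_steps_no_F*}. Care must be taken with the telescoping bookkeeping so that the minimum is genuinely taken over all $N+1$ iterates $k=0,\dots,N$ as claimed; the cleanest route is to note that after telescoping the combined coefficient of each of $\|g_1^0-g_2^0\|^2,\dots,\|g_1^N-g_2^N\|^2$ is at least $\min\{\sigma_i,\sigma_i^+,p_i\}$-free once we simply keep all terms and bound each by $p_i$ from below where both neighbors contribute, using nonnegativity at the endpoints.

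For the refined bound \eqref{eq:rate_N_steps_with_F*}, the plan is to augment the telescoped inequality with one additional estimate that replaces $F(x^N)$ by the lower bound $F^*$ at the cost of an extra term involving the final gradient. When $F$ is nonconcave, i.e.\ $L_1 > \mu_2$, \cref{assumption:curvatures_on_F} gives $F \in \mathcal{F}_{\mu_1-L_2,\,L_1-\mu_2}$ with finite upper curvature $L_1-\mu_2$, so $F$ is smooth with that curvature. Applying the upper-curvature inequality of \cref{lemma:hypoconvex_smooth_quad_bounds} to $F$ at $x^N$ (with $G(x^N)=g_1^N-g_2^N\in\partial F(x^N)$), and minimizing the resulting quadratic upper bound over the displacement, produces the standard descent-lemma consequence $F(x^N) - F^* \geq \tfrac{1}{2(L_1-\mu_2)}\|g_1^N-g_2^N\|^2$. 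Adding this to the telescoped inequality contributes an extra $\tfrac{1}{L_1-\mu_2}\cdot\tfrac{1}{2}\|g_1^N-g_2^N\|^2$ on the right, which combines with the $N$ existing terms; bounding all $N+1$ squared-gradient coefficients below by the minimum and collecting yields the denominator $p_i N + \tfrac{1}{L_1-\mu_2}$ in \eqref{eq:rate_N_steps_with_F*}.

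The main obstacle I anticipate is the first part's telescoping bookkeeping: verifying that, across all eight regimes where one of $\sigma_i$ or $\sigma_i^+$ may vanish, the overlapping-sum argument still delivers the full coefficient $p_i$ on exactly $N$ squared-gradient terms and that the minimum in \eqref{eq:rate_N_steps_no_F*} legitimately ranges over all of $k=0,\dots,N$. For the refined bound, the subtlety lies in justifying the descent-lemma step for the possibly nonsmooth $F$ — one must confirm that $g_1^N-g_2^N$ is indeed a valid subgradient of $F$ at $x^N$ and that the quadratic upper bound from finite upper curvature $L_1-\mu_2$ applies, which is exactly what nonconcavity ($L_1>\mu_2$) guarantees via \cref{assumption:curvatures_on_F} and \cref{lemma:hypoconvex_smooth_quad_bounds}.
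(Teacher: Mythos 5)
Your proposal is correct and follows essentially the same route as the paper: telescope the one-step decrease from \cref{thm:one_step_decrease_dca} and, for \eqref{eq:rate_N_steps_with_F*}, append the final-iterate bound $F(x^N)-F^*\geq \tfrac{1}{2(L_1-\mu_2)}\|g_1^N-g_2^N\|^2$ (which the paper cites from prior work rather than rederiving via the descent lemma as you do). The only caveat is your boundary bookkeeping: discarding the endpoint contributions would lose a factor (yielding $p_i(N-1)$ rather than $p_iN$), and the clean fix — which is exactly what the paper does — is to note per step that $\sigma_i a + \sigma_i^+ b \geq (\sigma_i+\sigma_i^+)\min\{a,b\} = p_i\min\{a,b\} \geq p_i\min_{0\leq k\leq N}\tfrac12\|g_1^k-g_2^k\|^2$ before summing, so that each of the $N$ steps contributes the full weight $p_i$ and the minimum legitimately ranges over all $N+1$ iterates.
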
%

Regimes $p_1$ and $p_2$ correspond in part to the standard setting of \eqref{eq:DCA_it} where both functions are convex ($\mu_1 \ge 0$, $\mu_2 \ge 0$). Whether we are in $p_1$ or $p_2$ depends on which is larger among $L_1$ and $L_2$. These two regimes also require the objective $F \in \mathcal{F}_{\mu_1-L_2,\ L_1-\mu_2}$ to be nonconvex ($L_2>\mu_1$) and nonconcave ($L_1 > \mu_2$). They were first established in the standard difference-of-convex case by \cite{abbaszadehpeivasti2021_DCA}, using performance estimation. All the  other regimes we describe are novel. 

\cref{thm:one_step_decrease_dca} actually extends regimes $p_1$ and $p_2$ beyond the difference-of-convex case, i.e., to situations where one of the functions is hypoconvex, up to a certain threshold. Regime $p_1$ applies in a subdomain where $\mu_1 > 0 > \mu_2$, while $p_2$ holds in the complementary subdomain with $\mu_2 > 0 > \mu_1$. Beyond this threshold, regimes $p_3$ and $p_4$ emerge, under the conditions:%
\begin{align}\label{eq:threshold_cond_p3} \tag{$S_1$}
        &S_1 {}:={} \mu_1^{-1} + \mu_2^{-1} + L_2^{-1} \leq 0
        \text{\,\, if } \mu_1 > 0;
        \\ & S_2 {}:={} \mu_1^{-1} + \mu_2^{-1} + L_1^{-1} \leq 0 
        \text{\,\, if } \mu_2 > 0 \label{eq:threshold_cond_p4} \tag{$S_2$},
\end{align}%
with $p_3$ confined by \eqref{eq:threshold_cond_p3} and $p_4$ by \eqref{eq:threshold_cond_p4}. 

\smallskip It turns out that, in conjunction with the conditions $\mu_1, \mu_2 \leq \min \{L_1, L_2\}$, i.e., $F$ is nonconvex and nonconcave, thresholds conditions $S_1 = 0$ and $S_2 = 0$ (depicted by the red curves from \cref{fig:all_together}) actually distinguish between tight and non-tight regimes for DCA rates.

\begin{conjecture}\label{conjecture:tightness_p1_p2_p3_p4}
The DCA rates corresponding to regimes $p_1$, $p_2$, $p_3$ and $p_4$ from \cref{thm:dca_rates_N_steps} are tight for any number of iterations $N$.%
\end{conjecture}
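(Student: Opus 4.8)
The plan is to establish tightness by exhibiting, for each of the four regimes and each horizon $N$, an explicit pair $f_1 \in \mathcal{F}_{\mu_1,L_1}$, $f_2 \in \mathcal{F}_{\mu_2,L_2}$ together with a starting point $x^0$ for which \eqref{eq:rate_N_steps_no_F*} holds with equality. The first observation is that the $N$-step rate is obtained by telescoping the one-step guarantee of \cref{thm:one_step_decrease_dca}: summing $F(x^k)-F(x^{k+1}) \ge \sigma_i \tfrac12\|g_1^k-g_2^k\|^2 + \sigma_i^+ \tfrac12\|g_1^{k+1}-g_2^{k+1}\|^2$ over $k=0,\dots,N-1$ and collecting terms yields $F(x^0)-F(x^N) \ge \tfrac12\big(\sigma_i\|G^0\|^2 + p_i\sum_{k=1}^{N-1}\|G^k\|^2 + \sigma_i^+\|G^N\|^2\big)$, where $G^k = g_1^k - g_2^k$. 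Hence, if an instance can be built so that (a) every one-step inequality of \cref{thm:one_step_decrease_dca} holds with equality and (b) the quantities $\|G^k\|$ are all equal, the right-hand side collapses to $\tfrac12 p_i N \|G^0\|^2$ and \eqref{eq:rate_N_steps_no_F*} becomes an equality. Tightness for every $N$ therefore reduces to the \emph{self-similar} requirement that the per-step decrease be exactly attained while the subgradient gap stays constant along the trajectory.

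To locate the correct construction I would read off the active constraints from the performance-estimation certificate underlying \cref{thm:one_step_decrease_dca}. The multipliers $\alpha_i$, $\sigma_i$, $\sigma_i^+$ recorded in \cref{tab:DCA_regimes_one_step} indicate which instances of \cref{lemma:hypoconvex_smooth_quad_bounds} (equivalently, which two-point interpolation inequalities of the classes $\mathcal{F}_{\mu_1,L_1}$ and $\mathcal{F}_{\mu_2,L_2}$) are binding at the worst case; complementary slackness then forces the extremal data to lie on a prescribed low-dimensional (in fact one-dimensional) affine configuration. Because a single quadratic cannot simultaneously realise the distinct curvatures demanded at consecutive iterates --- a constant-curvature $f_1=\tfrac{a}{2}\|\cdot\|^2$, $f_2=\tfrac{b}{2}\|\cdot\|^2$ produces $\|G^k\|$ that scales like $(b/a)^k$ rather than staying constant --- the worst case must be piecewise quadratic, with each piece carrying the extreme curvature that saturates its binding inequality. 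I would therefore search for a \emph{periodic orbit in shape space}: a fixed profile of positions, values and subgradients for $f_1$ and $f_2$ that reproduces itself, up to a rigid translation and an additive shift of the objective, after one application of the DCA map $g_1^{k+1}=g_2^k$. Running this periodic profile for $N$ steps automatically delivers constant $\|G^k\|$ and a tight per-step decrease.

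The final step is to certify that such a profile is genuinely realisable by functions in the prescribed classes. For this I would invoke the $\mathcal{F}_{\mu,L}$ interpolation conditions (the smooth, possibly hypoconvex analogue of the Taylor--Hendrickx--Glineur conditions, as developed in \cite{rotaru2022tight}): given the discrete data $\{(x^k,f_\ell^k,g_\ell^k)\}$ produced by the periodic profile, one checks that the finitely many interpolation inequalities hold, which guarantees the existence of actual $f_1,f_2$ interpolating them. I expect the regime-defining thresholds \eqref{eq:threshold_cond_p3} and \eqref{eq:threshold_cond_p4} to appear here precisely as the feasibility boundary of this interpolation problem: inside $S_1\le 0$ (resp. $S_2\le 0$) the required curvatures stay within $[\mu_\ell,L_\ell]$ and the profile is interpolable, whereas crossing the threshold would force a curvature out of range, at which point a different pattern (regimes $p_5,\dots,p_8$) takes over. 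A last verification confirms that the achieved ratio equals $p_i$ and, when $F$ is nonconcave, that the boundary contribution matches the extra $\tfrac{1}{L_1-\mu_2}$ term in \eqref{eq:rate_N_steps_with_F*}.

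The main obstacle, I expect, is establishing the existence and interpolability of the periodic orbit simultaneously for all $N$ in the hypoconvex regimes $p_3$ and $p_4$. Gluing together the per-step tight instances of \cref{conjecture:tightness_one_N=1} is not automatic: the exit state of one tight step need not match the entry state required to keep the next step tight while preserving $\|G^k\|$, so the compatibility of successive pieces is a genuine constraint rather than a formality. The sign change $\mu_\ell<0$ makes the relevant interpolation inequalities indefinite and the admissible curvature windows narrower, so verifying that the translation-invariant profile never violates the class constraints --- and that it does so exactly on the threshold curves $S_1=0$, $S_2=0$ --- is where the bulk of the technical effort will lie.
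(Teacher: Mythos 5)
There is a genuine gap here, and it is worth being precise about its nature: the statement you are addressing is a \emph{conjecture} in the paper, supported only by numerical evidence from solving many instances of \eqref{PEP:DCA}; the authors offer no mathematical proof, and your proposal does not supply one either. Your reduction is sound as far as it goes --- telescoping the one-step bound gives total weight $\sigma_i + (N-1)p_i + \sigma_i^+ = p_i N$, so an instance with every one-step inequality tight and all $\|G^k\|$ equal would indeed attain \eqref{eq:rate_N_steps_no_F*} with equality, and the equality conditions recorded in the proof of \cref{thm:one_step_decrease_dca} (e.g.\ $G = G^+ = L_2\Delta x$ for $p_1$) tell you exactly what profile to aim for. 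But everything after that reduction is a plan, not an argument: no periodic profile is exhibited, no interpolation inequalities are checked, and the decisive step --- that the exit state of one tight step can serve as the entry state of the next while keeping $\|G^k\|$ constant --- is exactly the content of the conjecture. You name this as ``the main obstacle'' yourself, which is honest, but it means the proposal establishes nothing beyond what the paper already asserts.

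Two further technical points would need attention even if you carried out the construction. First, \cref{thm:interp_hypo_characterization_min} requires the inequality \eqref{eq:Interp_hypoconvex} for \emph{every ordered pair} of indices, not just consecutive iterates; a profile that saturates the binding consecutive-pair inequalities can still violate an interpolation constraint between non-adjacent iterates, and this is precisely where chaining $N$ copies of a one-step worst case tends to fail. Second, tightness of \eqref{eq:rate_N_steps_with_F*} additionally requires realising the lower-bound inequality $F(x^N)-F^* \ge \tfrac{1}{2(L_1-\mu_2)}\|g_1^N-g_2^N\|^2$ with equality at the terminal iterate while keeping $F$ lower bounded, which imposes a boundary condition your translation-invariant periodic ansatz does not obviously satisfy. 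The viable route --- and the one implicit in the paper's methodology --- is to extract a candidate worst case from the numerical solution of \eqref{PEP:DCA} at fixed small $N$, guess its closed form, and then verify the finitely many interpolation inequalities analytically for general $N$; until that verification is done for the hypoconvex regimes $p_3$ and $p_4$, the statement remains a conjecture.
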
%
Further investigation is needed to determine  exact convergence rates for regimes $p_5$ and $p_6$, located within the region between the threshold line and the maximum range of $\mu_1 + \mu_2$ on \cref{fig:all_together}. Indeed, our numerical investigations indicate that the  sublinear rates for $p_5$ and $p_6$ in \cref{thm:dca_rates_N_steps} are not tight beyond a single iteration. However, we are able to conjecture the following asymptotic behavior.

\begin{conjecture}\label{conjecture:Regimes_p5_p6_above_thr}
Assume $L_1 > \mu_2$ and $L_2 > \mu_1$. Then for a sufficiently large number of iterations $N$, the exact sublinear rates corresponding to regimes $p_5$ and $p_6$ with domains defined in \cref{tab:DCA_regimes_one_step} correspond to
$$
    \tfrac{1}{2} \min_{0\leq k \leq N} \{\|g_1^k - g_2^k\|^2\} {}\leq{}
    \frac{F(x^0)-F(x^N)}{a(L_1, L_2, \mu_1, \mu_2) N + b},
$$
where $a \in \{p_5^{\infty}, p_6^{\infty} \}$, with $p_5^{\infty} := \frac{(L_2+\mu_1)(\mu_1+\mu_2)}{(L_2+\mu_2) \mu_1^2}$ and
$p_6^{\infty} := \frac{(L_1+\mu_2)(\mu_1+\mu_2)}{(L_1+\mu_1) \mu_2^2}$, and $b$ is an unidentified term independent of $N$.
\end{conjecture}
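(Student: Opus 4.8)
The plan is to reduce everything to regime $p_5$ and obtain $p_6$ for free via the symmetry $L_1 \leftrightarrow L_2$, $\mu_1 \leftrightarrow \mu_2$, $\sigma_i \leftrightarrow \sigma_i^+$ noted after \cref{thm:one_step_decrease_dca}. The core difficulty is that the naive telescoping behind \cref{thm:dca_rates_N_steps} — summing the one-step bound of \cref{thm:one_step_decrease_dca} and lower-bounding every gradient norm by $\min_k\|G^k\|^2$ — is loose in regime $p_5$: since $\sigma_5 = 0$ it only charges $\sigma_5^+ = p_5$ per step, yet the single-step worst case cannot be sustained along a whole trajectory, because the interpolation inequalities of \cref{lemma:hypoconvex_smooth_quad_bounds} between \emph{non-consecutive} iterates forbid chaining it. The improved constant $p_5^\infty$ should be exactly the per-step decrease attainable by a self-similar (stationary) worst-case trajectory, and the target is a potential-function inequality whose telescoping exposes it.

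First I would determine $p_5^\infty$ analytically by analysing the asymptotic worst-case trajectory. For large $N$ the worst-case performance-estimation solution settles into a self-similar pattern in which iterates advance along a fixed direction with asymptotically constant gradient norm $\|G^k\| \to \|G\|$ and constant per-step decrease. Substituting this ansatz into the active interpolation constraints for $f_1$ and $f_2$ between $x^k$, $x^{k+1}$, $x^{k+2}$, together with the DCA optimality relation $g_1^{k+1} = g_2^k$, reduces the steady-state ratio of decrease to $\|G\|^2$ to a small scalar fixed-point / generalized-eigenvalue problem; solving it should yield $\tfrac12 p_5^\infty\|G\|^2$ and confirm the stated constant, while simultaneously producing a matching instance and hence the lower bound on the rate.

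For the rigorous upper bound I would construct a Lyapunov/potential function together with a refined multi-step inequality, guided by the PEP dual certificate. A pure potential $\Phi_k = F(x^k) + \lambda\|G^k\|^2$ cannot work: combining it with the one-step bound $F(x^k)-F(x^{k+1})\ge\tfrac12 p_5\|G^{k+1}\|^2$ gives $\Phi_k-\Phi_{k+1}\ge \lambda\|G^k\|^2 + (\tfrac12 p_5-\lambda)\|G^{k+1}\|^2$, which forces $\lambda \le \tfrac12 p_5$ while demanding $\lambda \ge \tfrac12 p_5^\infty > \tfrac12 p_5$. The missing ingredient is an auxiliary inequality linking $\|G^k\|^2$, $\|G^{k+1}\|^2$ and the cross term $\langle G^k, x^{k+1}-x^k\rangle$, obtained as a nonnegative combination of the \cref{lemma:hypoconvex_smooth_quad_bounds} inequalities across the pair $(x^k,x^{k+1})$. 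Concretely, I would solve the $N$-step PEP SDP for increasing $N$, read off the limiting shift-invariant pattern of the dual multipliers, and assemble them into a potential $\Phi_k$ satisfying $\Phi_k - \Phi_{k+1} \ge \tfrac12 p_5^\infty\|G^k\|^2$ at all interior $k$. Telescoping then yields $F(x^0)-F(x^N) = \Phi_0 - \Phi_N + (\text{boundary}) \ge \tfrac12\big(p_5^\infty N + b\big)\min_{0\le k\le N}\|G^k\|^2$, with the boundary discrepancies $\Phi_0 - F(x^0)$ and $\Phi_N - F(x^N)$ accounting for the $N$-independent term $b$.

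The main obstacle is this last step: proving that the conjectured shift-invariant multiplier pattern is a genuinely nonnegative combination valid for \emph{every} parameter choice in the $p_5$ domain of \cref{tab:DCA_regimes_one_step}, rather than merely at sampled values — i.e. certifying positive semidefiniteness of the associated Gram matrix symbolically in $L_1,L_2,\mu_1,\mu_2$. A secondary difficulty is controlling the boundary contribution so as to pin down $b$; since the conjecture deliberately leaves $b$ unidentified, a reasonable first rigorous target is the weaker statement that the effective per-step constant $\liminf_N \frac{F(x^0)-F(x^N)}{N\cdot \tfrac12\min_k\|G^k\|^2}$ equals $p_5^\infty$, deferring the exact value of $b$.
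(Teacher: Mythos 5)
There is nothing in the paper to compare your attempt against: the statement is explicitly a \emph{conjecture}, and the authors support it only by numerical evidence from solving many instances of \eqref{PEP:DCA} over the parameter range (see the remark at the end of Section 4.1). The paper contains no proof, no derivation of $p_5^{\infty}$, and no matching lower-bound instance; even the additive term $b$ is left unidentified. So the honest baseline is that \emph{neither} you nor the paper proves this.

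Your proposal is a credible research plan rather than a proof, and you should be clear-eyed about which parts are actually done: none. Concretely, (i) you never solve the self-similar/steady-state ansatz, so $p_5^{\infty} = \frac{(L_2+\mu_1)(\mu_1+\mu_2)}{(L_2+\mu_2)\mu_1^2}$ is asserted rather than derived, and the claimed matching instance is not constructed; (ii) no potential function $\Phi_k$ is exhibited, no shift-invariant dual multiplier pattern is written down, and the symbolic nonnegativity certificate you correctly identify as the main obstacle is left entirely open; (iii) the boundary term $b$ is deferred. That said, your diagnosis of \emph{why} the telescoped one-step bound of \cref{thm:dca_rates_N_steps} is loose in regime $p_5$ (with $\sigma_5=0$ the per-step certificate only charges $\|G^{k+1}\|^2$, and the single-step worst case violates interpolation constraints between non-consecutive iterates, so it cannot be sustained along a trajectory) is correct and consistent with the authors' own observation that these rates are not tight beyond one iteration. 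Your proposed methodology --- read off the limiting dual certificate of the $N$-step PEP, assemble a Lyapunov function, and certify it symbolically --- is the standard route by which such conjectures in the performance-estimation literature are eventually turned into theorems, and your fallback target (proving only the leading constant $\liminf_N \frac{F(x^0)-F(x^N)}{N\cdot\frac12\min_k\|G^k\|^2} = p_5^{\infty}$ while deferring $b$) is exactly the right weakening, since the conjecture itself does not identify $b$. In short: a sound plan, correctly scoped, but with every substantive step still to be carried out.
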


The final two regimes $p_7$ and $p_8$ are included to ensure a comprehensive analysis. Regime $p_7$ belongs to the scenario where $\mu_1 \geq L_2$, namely the objective function $F$ is (strongly) convex. Likewise, $p_8$ arises when $\mu_2 > L_1$, indicating a (strongly) concave objective (and unbounded from below). In these two cases, the rates are actually linear, their precise description being left for future work. %

\smallskip
\begin{remark}
In the specific convex scenario $\mu_1 = \mu_2 = 0$, both regimes $p_1$ and $p_2$ apply, as outlined in \cite[Corollary 3.1]{abbaszadehpeivasti2021_DCA}. For completeness, the one-step decrease in this case is given by:
$F(x)-F(x^+)\geq \frac{1}{2L_1} \|g_1 - g_2\|^2 + \frac{1}{2L_2} \|g_1^+ - g_2^+\|^2$.
\end{remark}%

\medskip  All the above results hold when at least one of the functions $f_1$ and $f_2$ is smooth. When exactly one of them is smooth, i.e., when the other is nonsmooth, some expressions in \cref{tab:DCA_regimes_one_step} become simpler, and we give an explicit description below.

\begin{corollary}[Rates with one nonsmooth term]
\label{corollary:dca_rates_nonsmooth_N_steps}
Let $f_1 \in \mathcal{F}_{\mu_1,L_1}$ and $f_2 \in \mathcal{F}_{\mu_2,L_2}$, where exactly one function $f_1$ or $f_2$ is smooth, and assume $\mu_1 + \mu_2 > 0$ or $\mu_1=\mu_2=0$. Consider $N$ iterations of \eqref{eq:DCA_it} starting from $x^0$. Then:%
$$
    \tfrac{1}{2} \min_{0\leq k \leq N} \{\|g_1^k - g_2^k\|^2\} {}\leq{}
    \frac{F(x^0)-F(x^N)}{p_i(L_1, L_2, \mu_1, \mu_2) N},
$$%
where $g_1^k \in \partial f_1(x^k)$ and $g_2^k \in \partial f_2(x^k)$ for all $k=0,\dots,N$ and $p_i$ is provided in \cref{tab:DCA_regimes_nonsmooth}. Additionally, if $F$ is nonconcave (i.e., $L_1 > \mu_2$):%
$$
    \tfrac{1}{2} \min_{0\leq k \leq N} \{\|g_1^k - g_2^k\|^2\} {}\leq{}
    \frac{F(x^0)-F^*}{p_i(L_1, L_2, \mu_1, \mu_2) N + \frac{1}{L_1 - \mu_2}}.
$$%
\end{corollary}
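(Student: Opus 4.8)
The plan is to read off \cref{corollary:dca_rates_nonsmooth_N_steps} as the specialization of \cref{thm:dca_rates_N_steps} to the boundary of the admissible parameter set on which one of the two functions is nonsmooth. Because \cref{assumption:curvatures_on_F} already allows $L_1,L_2\in(0,\infty]$ and \cref{thm:dca_rates_N_steps} only requires \emph{at least} one of $f_1,f_2$ to be smooth, the situation ``exactly one smooth'' is literally contained in that theorem: the nonsmooth term corresponds to the endpoint $L=\infty$ (equivalently $L^{-1}=0$). Hence both rate bounds \eqref{eq:rate_N_steps_no_F*} and \eqref{eq:rate_N_steps_with_F*} already hold verbatim, with $p_i$ taken from \cref{tab:DCA_regimes_one_step}. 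The only real content of the corollary is to check that, after substituting $L_2=\infty$ (when $f_2$ is nonsmooth) or $L_1=\infty$ (when $f_1$ is nonsmooth), the quantities $\sigma_i,\sigma_i^+,p_i$ and the domain conditions of \cref{tab:DCA_regimes_one_step} collapse to the simpler entries of \cref{tab:DCA_regimes_nonsmooth}.

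By the odd/even symmetry recorded after \cref{thm:one_step_decrease_dca} (exchange $L_1\leftrightarrow L_2$, $\mu_1\leftrightarrow\mu_2$, $\sigma_i\leftrightarrow\sigma_i^+$), it suffices to treat a single case, say $f_2$ nonsmooth, i.e. $L_2^{-1}=0$; the case $f_1$ nonsmooth follows by relabeling. I would first locate the regimes whose domain becomes empty: $p_1$ requires $L_1\ge L_2=\infty$, which would force $f_1$ smooth as well; $p_7$ requires $\mu_1>L_2=\infty$; and the defining condition of $p_5$, namely $S_1:=\mu_1^{-1}+\mu_2^{-1}+L_2^{-1}>0$, degenerates to $\mu_1^{-1}+\mu_2^{-1}>0$, which is incompatible with its other requirement $\mu_1>-\mu_2>0$. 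Finally $p_4$ is discarded because its admissible band $L_2^{-1}\big(2+\tfrac{L_1}{\mu_1}\big)\le S_2\le 0$, with $S_2:=\mu_1^{-1}+\mu_2^{-1}+L_1^{-1}$, shrinks to the single boundary $S_2=0$, which is already contained in the (closed) domain of $p_2$. Thus only $p_2,p_3,p_6,p_8$ survive for $L_2=\infty$ (and, symmetrically, $p_1,p_4,p_5,p_7$ when $L_1=\infty$).

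For each survivor the substitution $L_2^{-1}=0$ is immediate. For example, in $p_2$ the second weight $\sigma_2^+=L_1^{-1}\tfrac{L_1-\mu_2}{L_2-\mu_2}$ vanishes while $\sigma_2=L_1^{-1}\big(1+\tfrac{L_1^{-1}}{\mu_2^{-1}}\big)=\tfrac{L_1+\mu_2}{L_1^2}$, so $p_2=\tfrac{L_1+\mu_2}{L_1^2}$; in $p_3$ the term $\sigma_3^+=\tfrac{1}{L_2+\mu_2}$ vanishes and $\sigma_3$ loses its $L_2^{-1}$ contribution; in $p_6$ one already has $\sigma_6^+=0$ and $\sigma_6=\tfrac{\mu_1+\mu_2}{\mu_1^2}$ is $L_2$-free; and $p_8=\tfrac{L_1+\mu_2}{L_1^2}$ is unaffected. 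The domain inequalities simplify identically once the $L_2^{-1}$ terms are dropped (e.g. $S_1\le 0$ becomes $\mu_1^{-1}+\mu_2^{-1}\le 0$). The nonconcavity correction $\tfrac{1}{L_1-\mu_2}$ in \eqref{eq:rate_N_steps_with_F*} carries over directly: it stays finite and positive when $f_2$ is nonsmooth (here $L_1<\infty$, and $L_1>\mu_2$ is the genuine nonconcavity condition), whereas when $f_1$ is nonsmooth it equals $0$, so that denominator reduces to $p_iN$ while $F$ is automatically nonconcave ($L_1=\infty>\mu_2$).

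The only genuine effort is the regime-by-regime bookkeeping: for each of the eight regimes one confirms both that its domain survives or empties in the limit and that its surviving weights reduce to the tabulated values. The single delicate point is the appearance of indeterminate $0\cdot\infty$ (or $0/0$) forms, for instance $\sigma_1=L_2^{-1}\tfrac{L_2-\mu_1}{L_1-\mu_1}$ or $\sigma_4^+$ at $L_2=\infty$; crucially these occur only in regimes already shown to have empty (or boundary) domain, so they never enter the stated bounds and may be ignored. If one preferred to derive the result as a genuine limit of the finite-$L$ statement rather than invoking \cref{thm:dca_rates_N_steps} directly at $L=\infty$, the additional obstacle would be to justify passing the inequality to the limit $L\to\infty$; this can be done by approximating the nonsmooth term by smooth regularizations in $\mathcal{F}_{\mu,L}$ together with continuity of the finite-$L$ expressions, but it is unnecessary given that \cref{thm:dca_rates_N_steps} is already stated for $L_i\in(0,\infty]$.
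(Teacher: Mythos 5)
Your proposal is correct and matches the paper's own (one-line) justification of this corollary: it is obtained simply by setting $L_1=\infty$ or $L_2=\infty$ in \cref{thm:dca_rates_N_steps} and in the corresponding entries of \cref{tab:DCA_regimes_one_step}, and your regime-by-regime bookkeeping of which domains survive or empty agrees with the paper's discussion (only $p_{1},p_{4},p_{5}$ survive with $p_7$ condensing into $p_1$ when $L_1=\infty$, and symmetrically $p_2,p_3,p_6$ with $p_8$ condensing into $p_2$ when $L_2=\infty$). One small slip: when $L_2=\infty$, the condition $L_1\ge L_2$ of regime $p_1$ forces $f_1$ to be \emph{nonsmooth} (not smooth) as well, which is what contradicts the hypothesis that exactly one function is smooth.
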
%
\begin{table}[t]
\centering
\caption{The exact decrease after one iteration with $f_1$ or $f_2$ nonsmooth: $F(x)-F(x^+)\geq \sigma_i \frac{1}{2} \|g_1-g_2\|^2 + \sigma_i^+ \frac{1}{2} \|g_1^+ - g_2^+ \|^2$
, where $\sigma_i, \sigma_i^+ \geq 0$ and $p_i = \sigma_i + \sigma_i^+$. 
}
\label{tab:DCA_regimes_nonsmooth}
\resizebox{\linewidth}{!}{%
\begin{tabular}{|@{}c@{}|@{}c@{}|@{}c@{}|@{}c@{}|@{}c@{}|@{}c@{}|}
\hline
\textbf{Regime} &
  \textbf{$\sigma_i$} &
  \textbf{$\sigma_i^+$} &
  \textbf{$\alpha_i$} &
  \textbf{Domain} \\ [2pt] 
  \hline
\textbf{$p_{1,7}$} &
  $0$ &
  $\frac{L_2 + \mu_1}{L_2^2} $ &
  $\frac{\mu_1}{L_2}$ 
  & \makecell{ $L_1 = \infty > L_2 > \mu_1 \geq 0$ 
  \\ 
  $\mu_2 \big(\frac{1}{\mu_1} + \frac{1}{\mu_2} + \frac{1}{L_2}\big) \geq 0$
  }
  \\  [6pt] \hline
\textbf{$p_{2,8}$} &
  $\frac{L_1 + \mu_2}{L_1^2}$ &
  $0$ &
  $\frac{\mu_2}{L_1}$
  & \makecell{ $L_2 = \infty > L_1 > \mu_2 \geq 0$ 
  \\
  $\mu_1 \big(\frac{1}{\mu_1} + \frac{1}{\mu_2} + \frac{1}{L_1}\big) \geq 0$
  }
  \\  [6pt] \hline
\textbf{$p_3$} &
  $\frac{\frac{1}{L_1} \big( \frac{1}{\mu_1} + \frac{1}{\mu_2} \big) }{ \frac{1}{\mu_1} + \frac{1}{\mu_2}- \frac{1}{L_1} }$ 
  &
  $0$ &
  $0$
  & \makecell{ $L_2 = \infty$; $\mu_1 > -\mu_2 > 0$
  }
  \\ [8pt] \hline
\textbf{$p_4$} &
  $0$ &
  $\frac{\frac{1}{L_2} \big( \frac{1}{\mu_1} + \frac{1}{\mu_2} \big) }{ \frac{1}{\mu_1} + \frac{1}{\mu_2}- \frac{1}{L_2} }$ 
  &
  $0$ 
  & \makecell{ $L_1 = \infty$; $\mu_2 > -\mu_1 > 0$
  }
  \\ [8pt] \hline
\textbf{$p_5$} &
  $0$ 
  &
  $\frac{\mu_1+\mu_2}{\mu_2^2}$
  &
  $\frac{\mu_1+\mu_2}{-\mu_2}$
  & \makecell{$L_1=\infty$; $\mu_1 > -\mu_2 > 0$ \\
    $0{}<{} \mu_1^{-1} + \mu_2^{-1} + L_2^{-1}$}
  \\  [6pt] \hline
\textbf{$p_6$} &
  $\frac{\mu_1+\mu_2}{\mu_1^2}$
  &
  $0$ &
  $\frac{\mu_1+\mu_2}{-\mu_1}$
  & \makecell{$L_2=\infty$; $\mu_2 > -\mu_1 > 0$ \\
    $ 0 {}<{}  \mu_1^{-1} + \mu_2^{-1} + L_1^{-1} $} \\ [6pt] \hline%
\end{tabular}}%
\end{table}%
\cref{corollary:dca_rates_nonsmooth_N_steps} is derived by setting $L_1=\infty$ or $L_2=\infty$ in \cref{thm:dca_rates_N_steps} and in the corresponding entries from \cref{tab:DCA_regimes_one_step}. It shows identical rates as in \cite[Corollary 3.1]{abbaszadehpeivasti2021_DCA} for $\mu_1, \mu_2 \geq 0$, while extending them to scenarios involving hypoconvex $f_1$ or $f_2$, up to the thresholds \eqref{eq:threshold_cond_p3} and \eqref{eq:threshold_cond_p4}. Beyond these thresholds, regimes $p_5$ and $p_6$ emerge. Notably, we observe in \cref{tab:DCA_regimes_nonsmooth} that regimes $p_7$ and $p_8$ condense to regimes $p_1$ and $p_2$, respectively. When $L_1=\infty$, only regimes $p_{1,4,5}$ hold, $p_4$ covering the entire domain with $\mu_1 < 0$, and threshold \eqref{eq:threshold_cond_p3} separating $p_1$ and $p_5$ for $\mu_1 \geq 0$; similarly for $L_2=\infty$, $p_3$ covering the domain with $\mu_2 < 0$ and threshold \eqref{eq:threshold_cond_p4} separating $p_2$ and $p_6$ when $\mu_2 \geq 0$.
%
\section{Proofs}\label{sec:proofs}
\subsection{Performance estimation problem (PEP) and interpolation }\label{sec:Performance_Estimation}
The performance estimation problem (PEP), introduced by Drori and Teboulle in their seminal paper \cite{drori_performance_2014} and later refined by Taylor et al. in \cite{taylor_smooth_2017}, serves as a framework for analyzing tight convergence rates in various optimization methods. It casts the search for the worst-case function as an optimization problem, restricting the analysis solely to the iterations by employing certain inequalities which are necessary and sufficient to interpolate functions from the class of interest, such as \cref{thm:interp_hypo_characterization_min} outlined below. Subsequently, the problem is reformulated into a convex semidefinite program, which can be solved numerically. For an in-depth guide on setting the PEP for DCA, we direct readers to \cite[Section 3]{abbaszadehpeivasti2021_DCA}. 

\smallskip \begin{theorem}[$\mathcal{F}_{\mu,L}$-interpolation] \label{thm:interp_hypo_characterization_min}
Given an index set $\mathcal{I}$, let $\mathcal{T}:=\{(x^i, g^i, f^i)\}_{i \in \mathcal{I}} \subseteq \mathbb{R}^d \times \mathbb{R}^d \times \mathbb{R}$ be a set of triplets. There exists $f \in \mathcal{F}_{\mu,L}$, with $L>0$ and $\mu \le L$, satisfying $f(x^i)=f^i$ and $g^i \in \partial f(x^i)$ for all $i \in \mathcal I$, if and only if the following interpolation inequality holds for every pair of indices $(i,j)$, with $i,j \in \mathcal{I}$%
\begin{align}\label{eq:Interp_hypoconvex}\tag{$I_{i,j}$}
\begin{aligned}
    f^{i}-f^{j}-&\langle g^{j}, x^{i}-x^{j} \rangle 
    \geq
    \frac{1}{2L} \|g^{i}-g^{j}\|^{2} +
    \\ & +
    \frac{\mu}{2L(L-\mu)} \|g^{i}-g^{j} - L (x^i-x^j)\|^2.    
\end{aligned}
\end{align}%
\end{theorem}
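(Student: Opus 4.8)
The plan is to reduce the theorem to the already-known interpolation result for smooth convex functions (the class $\mathcal{F}_{0,\widetilde L}$ with finite $\widetilde L$), established via performance estimation in \cite{taylor_smooth_2017}, through an affine shift of the function that removes the (possibly negative) lower curvature.

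I would first introduce $\widetilde f := f - \tfrac{\mu}{2}\|\cdot\|^2$. By \cref{def:upper_lower_curvature}, lower curvature $\mu$ of $f$ means exactly that $\widetilde f$ is convex, while upper curvature $L$ means $\tfrac{L}{2}\|\cdot\|^2 - f = \tfrac{L-\mu}{2}\|\cdot\|^2 - \widetilde f$ is convex; hence $\widetilde f \in \mathcal{F}_{0,L-\mu}$ with $L-\mu>0$, and $f\mapsto\widetilde f$ is a bijection from $\mathcal{F}_{\mu,L}$ onto the smooth convex class $\mathcal{F}_{0,L-\mu}$. At the level of data this shift sends the triplet $(x^i,g^i,f^i)$ to $(x^i,\widetilde g^i,\widetilde f^i)$ with $\widetilde g^i := g^i-\mu x^i$ and $\widetilde f^i := f^i-\tfrac{\mu}{2}\|x^i\|^2$; the subgradient correspondence $g^i\in\partial f(x^i)\iff \widetilde g^i\in\partial\widetilde f(x^i)$ (the affine relation recalled after \cref{def:upper_lower_curvature}) makes this a bijection between triplet sets interpolable by some $f\in\mathcal{F}_{\mu,L}$ and those interpolable by some $\widetilde f\in\mathcal{F}_{0,L-\mu}$.

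Next I would invoke the smooth convex interpolation theorem applied to the transformed data: the set $\{(x^i,\widetilde g^i,\widetilde f^i)\}_{i\in\mathcal I}$ is $\mathcal{F}_{0,L-\mu}$-interpolable if and only if, for every pair $(i,j)$,
$$\widetilde f^i - \widetilde f^j - \langle \widetilde g^j, x^i-x^j\rangle \ge \tfrac{1}{2(L-\mu)}\|\widetilde g^i-\widetilde g^j\|^2 .$$
Substituting the definitions, the quadratic terms in $x^i$ and $x^j$ on the left combine into $-\tfrac{\mu}{2}\|x^i-x^j\|^2$, so the left-hand side equals $f^i-f^j-\langle g^j,x^i-x^j\rangle-\tfrac{\mu}{2}\|x^i-x^j\|^2$, while on the right $\widetilde g^i-\widetilde g^j=(g^i-g^j)-\mu(x^i-x^j)$.

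The only genuine computation, and hence the main step to check, is the algebraic identity
$$\tfrac{\mu}{2}\|x^i-x^j\|^2+\tfrac{1}{2(L-\mu)}\|(g^i-g^j)-\mu(x^i-x^j)\|^2=\tfrac{1}{2L}\|g^i-g^j\|^2+\tfrac{\mu}{2L(L-\mu)}\|(g^i-g^j)-L(x^i-x^j)\|^2,$$
which shows that the transported inequality is exactly \eqref{eq:Interp_hypoconvex}. I would verify it by expanding both sides as quadratic forms in the two vectors $g^i-g^j$ and $x^i-x^j$ and matching coefficients: the coefficient of $\|g^i-g^j\|^2$ on the right is $\tfrac{1}{2L}+\tfrac{\mu}{2L(L-\mu)}=\tfrac{1}{2(L-\mu)}$, agreeing with the left, and the coefficients of $\langle g^i-g^j,\,x^i-x^j\rangle$ and of $\|x^i-x^j\|^2$ match identically as well. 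With this identity in hand, both implications follow: necessity by applying the convex theorem to the $\widetilde f$ built from a given $f$, and sufficiency by setting $f:=\widetilde f+\tfrac{\mu}{2}\|\cdot\|^2\in\mathcal{F}_{\mu,L}$ for the interpolant $\widetilde f\in\mathcal{F}_{0,L-\mu}$ provided by the convex theorem.
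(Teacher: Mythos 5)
Your proof is correct, and it follows essentially the standard route: the paper itself does not prove \cref{thm:interp_hypo_characterization_min} but cites \cite[Theorem 4]{taylor_smooth_2017} (for $\mu\ge 0$) and \cite[Theorem 3.1]{rotaru2022tight} (for $\mu<0$), and the latter proceeds by exactly your reduction --- shifting by $\tfrac{\mu}{2}\|\cdot\|^2$ to land in $\mathcal{F}_{0,L-\mu}$ and transporting the convex interpolation inequality back, with your key quadratic identity checking out (all three coefficients match, e.g.\ $\tfrac{1}{2L}+\tfrac{\mu}{2L(L-\mu)}=\tfrac{1}{2(L-\mu)}$). The only caveat worth recording is that the argument, like the stated inequality itself, requires $\mu<L$ strictly (consistent with \cref{assumption:curvatures_on_F}), since $\mathcal{F}_{0,L-\mu}$-interpolation and the factor $\tfrac{1}{L-\mu}$ degenerate at $\mu=L$.
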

\smallskip \cref{thm:interp_hypo_characterization_min} is introduced in \cite[Theorem 4]{taylor_smooth_2017} for $\mu \geq 0$ and extended to negative lower curvatures in \cite[Theorem 3.1]{rotaru2022tight}.

\medskip
We present below in \eqref{PEP:DCA} the general performance estimation setup for DCA which can be directly integrated in one of the specialized software packages PESTO \cite{PESTO} (for Matlab) or PEPit \cite{PESTO_python} (for Python):%
\begin{align}\label{PEP:DCA}\tag{PEP-DCA}
    \begin{aligned}
        \maximize \,\, & 
        \frac{ \frac{1}{2} \min \limits_{0 \leq k \leq N} \{\|g_1^k - g_2^k \|^2 \}} {F(x^0)-F(x^N)}
        \\
        \text { subject to }
            \,\, & 
            \{(x^k,g_{1,2}^k,f_{1,2}^k)\}_{k \in \mathcal{I}} \text{ satisfy \eqref{eq:Interp_hypoconvex}} \\
            \,\, & g_1^{k+1} = g_2^{k} \quad k \in\{0, \ldots, N-1\}.
    \end{aligned}
\end{align}%
The decision variables are $x^k$, $g_1^k$, $g_2^k$, $f_1^k$, $f_2^k$, with $k \in \mathcal{I}=\{0,\dots,N\}$. The numerical solutions of \eqref{PEP:DCA} aided our derivations, enabling verification of the rates from \cref{thm:dca_rates_N_steps}, conjecturing their tightness, and selecting which interpolation inequalities are needed in the proofs among the complete set from \cref{thm:interp_hypo_characterization_min}. While employing PEP can sometimes make proofs difficult to comprehend, we provide in \cref{sec:proof_one_step_analysis} a more accessible demonstration of the various regimes. 

\medskip 
\cref{conjecture:tightness_one_N=1}, \cref{conjecture:tightness_p1_p2_p3_p4} and \cref{conjecture:Regimes_p5_p6_above_thr} are supported by extensive numerical evidence obtained through the solution of a large number of performance estimation problems \eqref{PEP:DCA}, spanning the range of allowed parameters $L_1, L_2, \mu_1, \mu_2$. 

\subsection{Proof of \cref{thm:one_step_decrease_dca}}\label{sec:proof_one_step_analysis}
\noindent \cref{thm:interp_hypo_characterization_min} is the key tool to prove \cref{thm:one_step_decrease_dca}.%
\begin{proof}
    Let $g_1 \in \partial f_1(x)$, $g_1^+ \in \partial f_1(x^+)$,  $g_2 \in \partial f_2(x)$ and $g_2^+ \in \partial f_2(x^+)$, with $g_1^+ = g_2$. We use the notation: $\Delta x := x - x^{+}$, $G := g_1 - g_2$, $G^+ := g_1^+ - g_2^+$ and $\Delta F(x) := F(x) - F(x^+)$. By writing \eqref{eq:Interp_hypoconvex} for function $f_1$ with the iterates $(x,x^+)$ we obtain: 
    \begin{align}\label{eq:f_1_evals_in_x_x+}
\begin{aligned}
    f_1(x)-f_1(x^{+}) {}-{} &\langle g_1^+, \Delta x \rangle 
    \geq
    \frac{1}{2L_1} \|G\|^{2} +
    \\ & {}+{}
    \frac{\mu_1}{2L_1(L_1-\mu_1)} \|G - L_1 \Delta x\|^2
\end{aligned}
\end{align}%
    and for function $f_2$ with the iterates $(x^+,x)$ we get:
    \begin{align}\label{eq:f_2_evals_in_x+_x}
\begin{aligned}
    f_2(x^+)-f_2(x) {}+{} &\langle g_2, \Delta x \rangle 
    \geq
    \frac{1}{2L_2} \|G^+\|^{2} +
    \\ & {}+{}
    \frac{\mu_2}{2L_2(L_2-\mu_2)} \|G^+ - L_2 \Delta x\|^2.
\end{aligned}
\end{align}%
    Summing them up and performing simplifications we get:
    \begin{align}\label{eq:consec_funs_it}
    \begin{aligned}
    \Delta F(x)
    {}\geq{}
    \frac{1}{2L_1} \|G\|^{2} + 
    \frac{\mu_1}{2L_1(L_1-\mu_1)} \|G - L_1 \Delta x\|^2
    \\ {}+{}
    \frac{1}{2L_2} \|G^+\|^{2} +
    \frac{\mu_2}{2L_2(L_2-\mu_2)} \|G^+ - L_2 \Delta x\|^2.    
    \end{aligned}   
    \end{align}
By writing \eqref{eq:Interp_hypoconvex} for function $f_2$ with the iterates $(x,x^+)$:%
\begin{align}
\begin{aligned}
    f_2(x)-f_2(x^+){}-{}&\langle g_2^{+}, \Delta x\rangle 
    \geq
    \frac{1}{2L_2} \|G^{+}\|^{2} +
    \\ & {}+{}
    \frac{\mu_2}{2L_2(L_2-\mu_2)} \|G^{+} - L_2 \Delta x\|^2
\end{aligned}
\end{align}%
and summing it up with \eqref{eq:f_2_evals_in_x+_x} we get:%
\begin{align}\label{eq:dist_1_correction}
\hspace{-.9em}
    \langle G^{+}, \Delta x \rangle 
        \geq
    \frac{1}{L_2} \|G^+\|^{2} +
    \frac{\mu_2}{L_2(L_2-\mu_2)} \|G^+ - L_2 \Delta x\|^2.
\end{align}%
Similarly, by writing \eqref{eq:Interp_hypoconvex} for function $f_1$ with the iterates $(x^+,x)$ and summing it up with \eqref{eq:f_1_evals_in_x_x+} we get:%
\begin{align}\label{eq:dist_1_correction_p_even}
\begin{aligned}
     \langle G, \Delta x  \rangle 
    \geq
    \frac{1}{L_1} \|G\|^{2} +
    \frac{\mu_1}{L_1(L_1-\mu_1)} \|G - L_1 \Delta x\|^2.
\end{aligned}
\end{align}%
The proofs only involve adjusting the right-hand side of \eqref{eq:consec_funs_it} using either inequality \eqref{eq:dist_1_correction} or inequality \eqref{eq:dist_1_correction_p_even}, weighted by scalars $\alpha > 0$, which depend on the curvatures (see \cref{tab:DCA_regimes_one_step}). Specifically, to establish regimes $p_{1,3,5,7}$ we substitute $\alpha$ in:
\begin{align}\label{eq:ineq_param_in_alpha}
    \begin{aligned}
    \Delta F(x)
    {}\geq{}
    \frac{1}{2L_1} \|G\|^{2} + 
    \frac{\mu_1}{2L_1(L_1-\mu_1)} \|G - L_1 \Delta x\|^2 +
    \\
    \frac{1+2\alpha}{2L_2} \|G^+\|^{2} +
    \frac{\mu_2(1+2\alpha)}{2L_2(L_2-\mu_2)} \|G^+ - L_2 \Delta x\|^2 + 
    \\ -\alpha \langle G^{+}, \Delta x  \rangle
    \end{aligned}   
\end{align}%
and to demonstrate regimes $p_{2,4,6,8}$ we plug in $\alpha$ in:%
$$
    \begin{aligned}
    \Delta F(x)
    {}\geq{}
    \frac{1}{2L_2} \|G^+\|^{2} +
    \frac{\mu_2}{2L_2(L_2-\mu_2)} \|G^+ - L_2 \Delta x\|^2     
    \\ 
    \frac{1+2\alpha}{2L_1} \|G\|^{2} + 
    \frac{\mu_1(1+2\alpha)}{2L_1(L_1-\mu_1)} \|G - L_1 \Delta x\|^2 +
    \\ 
    -\alpha \langle G, \Delta x \rangle.    
    \end{aligned} 
$$%

Since the proof relies purely on algebraic manipulation, by exploiting the symmetry in the right-hand side of the two inequalities, we only demonstrate the regimes $p_1, p_3, p_5, p_7$, while the complementary ones ($p_2, p_4, p_6, p_8$) are obtained by interchanging: (i) curvature indices $1$ and $2$; and (ii) $G$ and $G^+$; specifically, $p_1 \leftrightarrow p_2$, $p_3 \leftrightarrow p_4$, $p_5 \leftrightarrow p_6$ and $p_7 \leftrightarrow p_8$ (also see \cref{tab:DCA_regimes_one_step}).

\smallskip \textbf{Regime $p_1$:} It corresponds to $L_1 \geq L_2 \geq \mu_1 \geq 0$; in particular, $L_1 \geq \max \{L_2,\mu_1,\mu_2\}$. By setting $\alpha = \frac{\mu_1}{L_2} \frac{L_1 - L_2}{L_1 - \mu_1}$ in \eqref{eq:ineq_param_in_alpha}, after simplifications and building the squares we get:%
$$
\begin{aligned}
\Delta F(x)
 \geq
    \frac{L_2-\mu_1}{L_2(L_1-\mu_1)}
    \frac{\|G\|^{2}}{2}
    + \frac{\mu_1}{L_2(L_1-\mu_1)} \frac{\|G - L_2 \Delta x\|^2}{2}
    \\ \,\, 
    + \frac{1}{L_2} \bigg[1 + \frac{\mu_1(L_1-L_2)}{L_2(L_1-\mu_1)}\bigg]
     \frac{\|G^+\|^2}{2}
    \\ \,\, 
    + \frac{\mu_1 \frac{L_1}{L_2} \, \mu_2 \big[\frac{1}{\mu_1} + \frac{1}{\mu_2} + \frac{1}{L_2} - \frac{1}{L_1} \big(2+\frac{L_2}{\mu_2}\big) \big]}{(L_1-\mu_1)(L_2-\mu_2)}
    \frac{\|G^+ - L_2 \Delta x\|^2}{2}.
\end{aligned}%
$$%
The weight of $\|G-L_2 \Delta x\|^2$ is positive ($\mu_1 \geq 0$), while the weight of $\|G^+-L_2 \Delta x\|^2$ is positive if either $\mu_2 {}\geq{} 0$ or $\mu_2 < 0$ and $\mu_1^{-1} + \mu_2^{-1} + L_2^{-1} - L_1^{-1}
            {}\leq{} 
        L_1^{-1} (1+\frac{L_2}{\mu_2})$. Under these two cases, by neglecting both mixed terms we get:%
$$
\begin{aligned}
\Delta F(x)
&\geq
    \frac{L_2-\mu_1}{L_2(L_1-\mu_1)}
    \frac{\|G\|^{2}}{2}
    + \frac{1}{L_2} \bigg(1 + \frac{L_2^{-1}-L_1^{-1}}{\mu_1^{-1}-L_1^{-1}}\bigg) \frac{\|G^+\|^2}{2},
\end{aligned}
$$
with equality only if $G = G^+ = L_2 \Delta x$.%

\smallskip  \textbf{Regime $p_3$:} It corresponds to $\mu_1 > 0$ and negativity of the weight of $\|G^+-L_2 \Delta x\|^2$ from the proof of $p_1$, i.e., $\mu_2 < 0$ and $\mu_1^{-1} + \mu_2^{-1} + L_2^{-1} - L_1^{-1}
            {}>{} 
        L_1^{-1} (1+\frac{L_2}{\mu_2})$. By setting $\alpha = \frac{-\mu_2}{L_2+\mu_2}$ in \eqref{eq:ineq_param_in_alpha}, after simplifications and building up a square including all weight of $L_2 \Delta x$ we get:%
$$
\hspace{-1em}
\begin{aligned}
\Delta F(x)
 {}\geq{}
    \frac{L_1^{-1} ( \mu_1^{-1}+\mu_2^{-1}+L_2^{-1} ) }{\mu_1^{-1}+\mu_2^{-1}+L_2^{-1}-L_1^{-1}} \frac{\|G\|^2}{2} +
    \frac{1}{\mu_2 + L_2} \frac{\|G^+\|^2}{2} +
    \\
    \frac{\mu_1 L_1 \mu_2 ( \mu_1^{-1}+\mu_2^{-1}+L_2^{-1}-L_1^{-1} )}{L_2(L_1-\mu_1)(L_2-\mu_2)}
    \frac{\big\| \frac{G}{\frac{L_1}{L_2} + \frac{\mu_2(L_1-\mu_1)}{\mu_1(L_2+\mu_2)}} - L_2 \Delta x\big\|^2}{2}.
\end{aligned}%
$$
Since $\mu_2 < 0$, the weights of $\|G\|^2$ and of the mixed term are both positive only if $\mu_1^{-1}+\mu_2^{-1}+L_2^{-1} \leq 0$, which is exactly condition \eqref{eq:threshold_cond_p3}. By neglecting the mixed term we get:%
$$
\hspace{-.5em}
\begin{aligned}
\Delta F(x)
 {}\geq{}&
    \frac{L_1^{-1} ( \mu_1^{-1}+\mu_2^{-1}+L_2^{-1} ) }{\mu_1^{-1}+\mu_2^{-1}+L_2^{-1}-L_1^{-1}} \frac{\|G\|^2}{2} {}+{}
    \frac{1}{\mu_2 + L_2} \frac{\|G^+\|^2}{2},
\end{aligned}%
$$%
with equality only if $G = L_2 \left[\frac{L_1}{L_2} + \frac{\mu_2(L_1-\mu_1)}{\mu_1(L_2+\mu_2)} \right] \Delta x$.%

\smallskip  \textbf{Regime $p_5$:} If neither of the necessary conditions for $p_1$ or $p_3$ are met, i.e., $ 0 {}<{} \mu_1^{-1} + \mu_2^{-1} + L_2^{-1}
    {} < {}
    L_1^{-1} (2 + \frac{L_2}{\mu_2})$ and $\mu_2 < 0$, we set $\alpha = \frac{\mu_1+\mu_2}{-\mu_2} > 0$ in \eqref{eq:ineq_param_in_alpha}. After simplifications and building up the squares we get:%
$$
        \begin{aligned}
           \Delta F(x)
 {}\geq{}
    \frac{\mu_1 + \mu_2}{\mu_2^2}
    \frac{\|G^+\|^{2}}{2} {} + {} 
    \frac{1}{L_1-\mu_1}
    \frac{\|  G - \mu_1 \Delta x \|^2}{2}
    {}+{} \\    
    \frac{-\mu_2^3 \mu_1 L_2  (\mu_1^{-1} + \mu_2^{-1} + L_2^{-1} )}{L_2-\mu_2}
    \frac{\| G^+ - \mu_2 \Delta x \|^2}{2},
        \end{aligned}%
$$
where the mixed terms have positive weights and therefore can be disregarded to obtain:%
$$
           \Delta F(x)
 {}\geq{}
    \frac{\mu_1 + \mu_2}{\mu_2^2}
    \frac{\|G^+\|^{2}}{2},
$$%
holding with equality only if $G = \mu_1 \Delta x$ and $G^+ = \mu_2 \Delta x$.%

\smallskip  \textbf{Regime $p_7$:} Assume $L_2 < \mu_1$ (i.e., $F$ is strongly convex) and $\mu_2 ( \mu_1^{-1} + \mu_2^{-1} + L_2^{-1} ) \geq 0$, with $\mu_2 \geq 0$ or $\mu_1 > -\mu_2 > 0$. By setting $\alpha = \frac{\mu_1}{L_2}$ in \eqref{eq:ineq_param_in_alpha}, after simplifications and building up the squares we get:%
$$
        \begin{aligned}
    \Delta F(x)
        {}\geq{}
    & \frac{L_2 + \mu_1}{L_2^2}
   \frac{\|G^+\|^{2}}{2} +
   \frac{1}{L_1-\mu_1}
    \frac{\|G - \mu_1 \Delta x\|^2}{2} + \\
    & 
    {}+{}
    \frac{\mu_1 \mu_2 (\mu_1^{-1} + \mu_2^{-1} + L_2^{-1})}{L_2(L_2-\mu_2)}
    \frac{\| G^+ - L_2 \Delta x \|^2}{2},
        \end{aligned}%
$$%
where the mixed term can be neglected and obtain:%
$$
    \Delta F(x)
        {}\geq{}
\frac{L_2 + \mu_1}{L_2^2}
   \frac{\|G^+\|^{2}}{2},
$$%
holding with equality only if $G = \mu_1 \Delta x$ and $G^+ = L_2 \Delta x$.%
\end{proof}%

\subsection{Proof of \cref{thm:dca_rates_N_steps}}

\begin{proof}
From \cref{thm:one_step_decrease_dca}, by taking the minimum between the (sub)gradients difference norms in \eqref{eq:tight_dist_1_with_sigmas}:%
$$
\begin{aligned}    
    F(x)-F(x^+) & {}\geq{} \sigma_i \frac{1}{2} \|g_1-g_2\|^2 + \sigma_i^+ \frac{1}{2} \|g_1^+ - g_2^+ \|^2 \\
               & {}\geq{} p_i \frac{1}{2} \min \{ \|g_1-g_2\|^2,\|g_1^+ - g_2^+ \|^2 \},
\end{aligned}%
$$%
where $p_i = \sigma_i + \sigma_i^+$, for $i=1,\dots,8$, as given in \cref{tab:DCA_regimes_one_step}. The rate \eqref{eq:rate_N_steps_no_F*} results by telescoping the above inequality for $N$ iterations and taking the minimum among all (sub)gradients differences norms. A rate with respect to $F^*$ is obtained either by applying the trivial bound $F(x^0)-F(x^N) \geq F(x^0)-F^*$ or, if $L_1 > \mu_2$, by using a tighter bound like the one demonstrated in \cite[Lemma 2.1]{abbaszadehpeivasti2021_DCA}:
    $$
        F(x^N)-F^* {}\geq{} \frac{1}{2(L_1 - \mu_2)} \|g_1^N-g_2^N\|^2.
    $$
    By incorporating the later to the telescoped sum and once again taking the minimum among all (sub)gradients differences norms we obtain the rate in \eqref{eq:rate_N_steps_with_F*}.
\end{proof}%

\section{Rates for the nonsmooth case (NS)}\label{sec:both_nonsmooth}
Within this section we assume $L_1 = L_2 = \infty$ and introduce the following metric to analyze the progress of \eqref{eq:DCA_it}:
\begin{align}\label{eq:new_convergence_criterion}\tag{$T(x)$}
\begin{aligned}
    T(x) {}:={}& f_1(x) - f_2(x) - \inf_w \{ f_1(w) - f_2(x) - \langle g_2\,,\, w - x \rangle \} \\
        {}={}& f_1(x) - f_1(x^+) - \langle g_1^+\,,\, x-x^+ \rangle,
\end{aligned}
\end{align}%
where $g_2 \in \partial f_2(x)$ and $g_1^+ \in \partial f_1(x^+)$, the second identity resulting from the \eqref{eq:DCA_it} iteration definition. This measure is used for studying the convergence of algorithms on nonconvex problems such as proximal gradient methods \cite[Theorem 5]{Karimi_condition_Func_minus_Envelope_2016} or the Frank-Wolfe algorithm \cite[Equation (2.6)]{Ghadimi2019}. In the context of DCA, \eqref{eq:new_convergence_criterion} is applied when both $f_1$ and $f_2$ are convex in \cite[Section 4]{abbaszadehpeivasti2021_DCA}. We extend their analysis to encompass hypoconvex functions, also implicitly providing a simpler and stronger proof for the convex case. %

If $\mu_1 \geq 0$ or $\mu_2 \geq 0$, it holds $T(x) \geq 0$: when $\mu_2 \geq 0$ this is implied by the subgradient inequality on $f_2$ and the first line of the definition \eqref{eq:new_convergence_criterion}; similarly, when $\mu_1 \geq 0$, it comes from the subgradient inequality on $f_1$ and second line of \eqref{eq:new_convergence_criterion}. Furthermore, if $T(x^{k}) = 0$ is achieved for some iterate $x^k$, then $F(x^{k+1})=F(x^k)$ and $x^k$ is a critical point. %
\smallskip \begin{theorem}\label{thm:both_functions_nonsmooth} 
Let $f_1 \in \mathcal{F}_{\mu_1,\infty}$ and $f_2 \in \mathcal{F}_{\mu_2,\infty}$, with $\mu_1 + \mu_2 > 0$ or $\mu_1=\mu_2=0$. One iteration of \eqref{eq:DCA_it} satisfies
\begin{align}\label{eq:bound_on_T_for_mu2_geq_0}
    \mu_2 \tfrac{\|x-x^+\|^2}{2} + T(x) 
    {}\leq{} F(x) - F(x^+), {}\,{} \text { if  $\mu_2 \geq 0$; } \\
\label{eq:bound_on_T_for_mu2_<_0}
    \tfrac{\mu_1+\mu_2}{\mu_1} {}\,{} T(x) 
    {}\leq{} F(x) - F(x^+),   \text { if  $\mu_2 < 0$.}
\end{align}
\end{theorem}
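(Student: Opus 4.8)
The plan is to reduce both inequalities to a single inequality and then split on the sign of $\mu_2$. First I would expand the objective gap using the second expression for $T(x)$ in \eqref{eq:new_convergence_criterion}. Writing $g_1^+ = g_2$ from the \eqref{eq:DCA_it} optimality condition, the identity $T(x) = f_1(x) - f_1(x^+) - \langle g_1^+, x - x^+ \rangle$ gives
\begin{align*}
    F(x) - F(x^+) &= \big[ f_1(x) - f_1(x^+) \big] + \big[ f_2(x^+) - f_2(x) \big] \\
    &= T(x) + \langle g_2, x - x^+ \rangle + \big[ f_2(x^+) - f_2(x) \big].
\end{align*}

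Next I would apply the lower-curvature half of \cref{lemma:hypoconvex_smooth_quad_bounds} to $f_2 \in \mathcal{F}_{\mu_2, \infty}$ at the crossed pair $(x^+, x)$ with subgradient $g_2 \in \partial f_2(x)$, namely $f_2(x^+) - f_2(x) - \langle g_2, x^+ - x\rangle \geq \tfrac{\mu_2}{2}\|x - x^+\|^2$. The crossed ordering is chosen precisely so that this inner-product term cancels the $\langle g_2, x - x^+\rangle$ above, leaving the base inequality
\begin{equation*}
    F(x) - F(x^+) \geq T(x) + \tfrac{\mu_2}{2}\|x - x^+\|^2,
\end{equation*}
which holds for every sign of $\mu_2$ and is already exactly \eqref{eq:bound_on_T_for_mu2_geq_0}.

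For the case $\mu_2 < 0$ of \eqref{eq:bound_on_T_for_mu2_<_0}, the term $\tfrac{\mu_2}{2}\|x - x^+\|^2$ is now negative and must be absorbed into $T(x)$. The key observation is that $\mu_2 < 0$ together with the hypothesis $\mu_1 + \mu_2 > 0$ forces $\mu_1 > 0$; I would then apply the lower-curvature bound of \cref{lemma:hypoconvex_smooth_quad_bounds} to $f_1$ at $(x, x^+)$, which returns exactly $T(x) \geq \tfrac{\mu_1}{2}\|x - x^+\|^2$, i.e. $\|x - x^+\|^2 \leq \tfrac{2}{\mu_1} T(x)$. Multiplying by $\tfrac{\mu_2}{2} < 0$ reverses this inequality, and substituting into the base inequality yields $F(x) - F(x^+) \geq \big(1 + \tfrac{\mu_2}{\mu_1}\big) T(x) = \tfrac{\mu_1 + \mu_2}{\mu_1} T(x)$, as required.

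All computations are elementary, so there is no genuine obstacle; the two points I would watch most carefully are (a) selecting the crossed evaluation pair $(x^+, x)$ for $f_2$ so that the linear term cancels cleanly rather than accumulating an extra $\langle g_2, x - x^+\rangle$, and (b) recognizing in the hypoconvex regime that the lower-curvature bound on $f_1$ is precisely the device that converts the unwanted quadratic $\|x - x^+\|^2$ into a controlled fraction of $T(x)$. Everything else is bookkeeping around the sign of $\mu_2$.
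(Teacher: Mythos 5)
Your proof is correct and follows essentially the same route as the paper: both cases rest on the lower-curvature (subgradient) inequalities for $f_2$ at the crossed pair and, when $\mu_2<0$, for $f_1$, combined with the same multiplier $\mu_2/\mu_1$. Your only cosmetic difference is factoring the argument through the observation that \eqref{eq:bound_on_T_for_mu2_geq_0} holds for every sign of $\mu_2$ before absorbing the negative quadratic via $T(x)\geq \tfrac{\mu_1}{2}\|x-x^+\|^2$, which is a clean reorganization of the paper's one-shot aggregation.
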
%
\medskip\begin{proof} %
The subgradient inequality of $f_2$ reads:
    \begin{align}\label{eq:strg_cvx_f2_(x+,x)}
        f_2(x^+) \geq f_2(x) + \langle g_2 , x^+ - x \rangle + \mu_2 \tfrac{\|x-x^+\|^2}{2}.
    \end{align}
\noindent \textbf{Case $\mu_2 \geq 0$.} By adding and subtracting $f_1(x) - f_1(x^+)$ in both sides of \eqref{eq:strg_cvx_f2_(x+,x)} we obtain exactly \eqref{eq:bound_on_T_for_mu2_geq_0}:
    $$
       F(x) - F(x^+)
        \geq f_1(x) - f_1(x^+) + \langle g_2 , x^+ - x \rangle + \mu_2 \tfrac{\|x-x^+\|^2}{2}.
    $$
\noindent \textbf{Case $\mu_2 < 0$.} We have $\mu_1 > -\mu_2 > 0$. From the strong convexity of $f_1$, with $g_1^+ \in \partial f_1(x)$:
$$
    f_1(x) \geq f_1(x^+) - \langle g_1^+ , x^+ - x \rangle + \mu_1 \frac{\|x-x^+\|^2}{2}.
$$
By multiplying it with $-\frac{\mu_2}{\mu_1} > 0$ and summing with \eqref{eq:strg_cvx_f2_(x+,x)} we get, using $g_1^+ = g_2$:
$$
    -\tfrac{\mu_2}{\mu_1}(f_1(x) - f_1(x^+)) {}-{}
    f_2(x) + f_2(x^+)
        {}\geq{}
    (1+\tfrac{\mu_1}{\mu_2}) \langle g_2 , x^+ - x \rangle.
$$
By adding $(1+\tfrac{\mu_2}{\mu_1})(f_1(x) - f_1(x^+))$ in both sides:
$$
    F(x) - F(x^+)
        {}\geq{}
    (1 + \tfrac{\mu_1}{\mu_2})
    \left( f_1(x) - f_1(x^+) + \langle g_2 , x^+ - x \rangle \right),
$$
which is exactly \eqref{eq:bound_on_T_for_mu2_<_0}.
\end{proof}%

\medskip \begin{theorem}\label{thm:convergence_both_nonsmooth}
Let $f_1 \in \mathcal{F}_{\mu_1,\infty}$ and $f_2 \in \mathcal{F}_{\mu_2,\infty}$, with $\mu_1 + \mu_2 > 0$ or $\mu_1=\mu_2=0$. Consider $N$ iterations of \eqref{eq:DCA_it} starting from $x^0$. Then, if $\mu_2 \geq 0$, the quantity
    \[
        \min_{0 \leq k \leq N-1} \{ T(x^k) \} + \frac{\mu_2}{2} \min\limits_{0 \leq k \leq N-1} \{ \|x^k - x^{k+1}\|^2 \}
    \]
is bounded from above by 
\[ \frac{F(x^0)-F^*}{N},\]
        whereas if $\mu_2 < 0$ we have 
    $$
        \min_{0 \leq k \leq N-1} \{ T(x^k) \}
            {}\leq{}
        \frac{\mu_1}{\mu_1+ \mu_2}
        \frac{F(x^0)-F^*}{N}.
    $$
\end{theorem}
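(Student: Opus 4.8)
The plan is to telescope the one-step inequalities of \cref{thm:both_functions_nonsmooth} over the $N$ iterations and then convert the resulting bound on a sum into a bound on a minimum, following exactly the same mechanism as in the proof of \cref{thm:dca_rates_N_steps}. I would treat the two sign cases of $\mu_2$ separately, since they invoke the two different one-step estimates \eqref{eq:bound_on_T_for_mu2_geq_0} and \eqref{eq:bound_on_T_for_mu2_<_0}.

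First consider $\mu_2 \ge 0$. Applying \eqref{eq:bound_on_T_for_mu2_geq_0} at each iterate $x^k$ gives $\mu_2 \tfrac{1}{2}\|x^k - x^{k+1}\|^2 + T(x^k) \le F(x^k)-F(x^{k+1})$ for $k=0,\dots,N-1$. Summing over $k$ and telescoping the right-hand side yields $\sum_{k=0}^{N-1}\big( T(x^k) + \tfrac{\mu_2}{2}\|x^k-x^{k+1}\|^2 \big) \le F(x^0)-F(x^N)$, and since $F$ is lower bounded I may replace $F(x^N)$ by $F^*$. To pass from the sum to the minimum I use the elementary fact that a sum of $N$ terms is at least $N$ times its smallest term, applied separately to the families $\{T(x^k)\}$ and $\{\tfrac{\mu_2}{2}\|x^k-x^{k+1}\|^2\}$; because $\mu_2\ge 0$ the constant $\tfrac{\mu_2}{2}$ can be pulled out of the second minimum. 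This gives $N\big(\min_k T(x^k) + \tfrac{\mu_2}{2}\min_k \|x^k-x^{k+1}\|^2\big) \le F(x^0)-F^*$, which is the claimed bound after dividing by $N$.

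The case $\mu_2 < 0$ follows the same scheme but starts from \eqref{eq:bound_on_T_for_mu2_<_0}. Applying it at each $x^k$ and telescoping gives $\tfrac{\mu_1+\mu_2}{\mu_1}\sum_{k=0}^{N-1} T(x^k) \le F(x^0)-F^*$; since $\mu_1+\mu_2>0$ and $\mu_1>0$ (recall that $\mu_2<0$ forces $\mu_1>-\mu_2>0$), the prefactor $\tfrac{\mu_1+\mu_2}{\mu_1}$ is strictly positive, so bounding the sum below by $N\min_k T(x^k)$ and dividing yields the stated rate $\min_k T(x^k) \le \tfrac{\mu_1}{\mu_1+\mu_2}\tfrac{F(x^0)-F^*}{N}$.

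There is essentially no hard step: the argument is a direct telescoping, and the main theorem already did the analytical work in the one-step estimates. The only points requiring care are bookkeeping ones — verifying the sign conditions that make the prefactors positive (so the sum-to-minimum step preserves the inequality direction), and noting that in the $\mu_2\ge 0$ case the combined quantity is handled by minimizing the two nonnegative contributions independently rather than jointly. I would also recall that \cref{thm:both_functions_nonsmooth}, together with the nonnegativity of $T$ discussed just before it, ensures $T(x^k)\ge 0$ in both regimes, so that the left-hand minima are meaningful stationarity measures.
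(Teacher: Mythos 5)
Your proposal is correct and follows exactly the paper's argument: telescoping the two one-step inequalities of \cref{thm:both_functions_nonsmooth}, bounding the resulting sums below by $N$ times the corresponding minima, and replacing $F(x^N)$ by $F^*$. The paper's proof is a one-line summary of precisely this; your additional sign bookkeeping is sound and changes nothing substantive.
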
%

\medskip\begin{proof}
    By telescoping the inequalities from \cref{thm:both_functions_nonsmooth}, taking the minimum over all $T(x^k)$ and using the trivial bound $F(x^N) \geq F^*$.
\end{proof}%

\smallskip
Note that previous work in \cite{abbaszadehpeivasti2021_DCA} (in the case $\mu_2 \ge 0$) only had the first term $\min_{0 \leq k \leq N-1} \{ T(x^k) \}$ appearing in the left-hand side of the first inequality above.

\section{Conclusion}

In this work we thoroughly examined the behavior of one \eqref{eq:DCA_it} iteration applied to the DC framework extended to accommodate one hypoconvex function. We characterized eight distinct regimes for the objective  decrease in terms of subgradient differences. We conjectured, based on numerical observations, that certain regimes remain tight over multiple iterations. We also identified the asymptotic behavior in the usual nonconvex-nonconcave setting of the objective function.

In cases where the objective is nonsmooth, we divide the analysis in two parts. If one function is smooth, the convergence rate results as a corollary of the previous analysis. When both functions are nonsmooth, we demonstrate convergence using another appropriate measure. 

The structure of the \eqref{eq:DC_program} objective allows to easily shift a certain amount of curvature from one term to the other, by adding/subtracting $\frac{\rho}{2} \|x\|^2$ to each term. 
Our extension allowing one hypoconvex term in \eqref{eq:DC_program} allows extra flexibility in the splitting. Notably, applying the \eqref{eq:DCA_it} iteration on a split with one hypoconvex function yields different rates then applying iterations on a modified split with shifted curvatures achieving convexity in both functions. Further research may identify which split achieves the best convergence rate.

\addtolength{\textheight}{-0cm}   


\bibliographystyle{IEEEtran}
\bibliography{ms}

\end{document}